\numberwithin{equation}{section}
\theoremstyle{plain}
\newtheorem{Th}{Theorem}[section]
\newtheorem{Lemma}[Th]{Lemma}
\newtheorem{Cor}[Th]{Corollary}
\newtheorem{Pro}[Th]{Proposition}
 \theoremstyle{definition}
\newtheorem{Def}[Th]{Definition}
\newtheorem{?}[Th]{Problem}
\DeclareMathOperator{\diag}{diag}
\DeclareMathOperator{\cha}{char}
\newcommand{\la}{\lambda }
\newcommand{\Z}{\mathbb{Z}}
\newcommand{\C}{\mathbb{C}}
\newcommand{\PP}{\mathbb{P}}
\newcommand{\Gr}{\mathbb{Gr}}
\newcommand{\tr}{\operatorname{tr}}
\begin{document}

\title{The triangulant}

\author[T.\ Bencze, P.\ E.\ Frenkel]{Tam\'as Bencze and P\'eter E.\ Frenkel}
\thanks{The second author's research is partially supported by NKFIH  grants K~146380 and KKP~139502.
}
\address{E\"{o}tv\"{o}s Lor\'{a}nd University,
  P\'{a}zm\'{a}ny P\'{e}ter s\'{e}t\'{a}ny 1/C, Budapest, 1117 Hungary \\ and R\'enyi Institute,  Budapest, Re\'altanoda u.\ 13-15, 1053 Hungary}
\email{ benczetamas11@gmail.com, frenkelp265@gmail.com}

 \dedicatory{}


 \keywords{eigenvector, Grassmannian, invariant subspace, mutually unbiased bases, simultaneous triangularizability}

\begin{abstract}We introduce the triangulant of two matrices, and relate it to the existence of orthogonal eigenvectors. We also use it for a new characterization of mutually unbiased bases.
Generalizing the notion, we introduce  higher order triangulants of two matrices, and relate them to the existence of nontrivially intersecting invariant subspaces of complementary dimensions. 
\end{abstract}

\maketitle
\section{Introduction}
There are many well-known results relating various  versions of (weak) commutativity of two (or more)  matrices to corresponding  versions of their simultaneous diagonalizability or triangularizability~\cite{L, RR}. In this paper, we consider a very weak version of simultaneous triangularizability --- so weak that matrix pairs satisfying it form a  hypersurface. Namely, we require the two matrices to have nontrivially intersecting invariant subspaces of complementary dimensions. Our main goal is to  determine the equation of that hypersurface. This equation expresses a  very weak version of commutativity.  When the ground field is $\C$, it  also yields a  new characterization of  mutually unbiased bases.

 \subsection*{Acknowledgement}
 
The authors are grateful to M\'aty\'as Domokos for helpful discussions.

\subsection*{Notations and terminology}

  The ring of $n$-square matrices with  entries in a commutative unital  ring $R$ is written $M_n(R)$. The identity matrix is $I$.  The group of invertible matrices is written $GL_n(R)$.
  The group of  matrices with determinant 1 is written $SL_n(R)$.

  A \emph{vector} is a column vector; $R^n$ is the set of column vectors. A  \emph{covector} is a row vector; $(R^n)^*$ is the set of row vectors.  For square matrices over fields,  \emph{eigenvector} means (right) eigenvector in the usual sense; an \emph{invariant subspace} of  a matrix $A$ is a vector space $V$ of column vectors such that  $AV\subseteq V$. An \emph{eigen-covector} (or \emph{left eigenvector}) of $A$ is a row vector $u\ne 0$ such that $uA=\la u$ for an eigenvalue $\la$; an \emph{invariant cosubspace} is  a vector space $U$ of row vectors such that  $UA\subseteq U$.  

  The letter $K$ always stands for a  field. 
  We write $K[A]$ for the unital subalgebra of $M_n(K)$ generated by $A$, so that $K[A]v$ is the $A$-invariant subspace generated by $v\in K^n$.

We write $V(x_1, \dots, x_n)$ for the Vandermonde matrix with $(i,j)$-entry $x_j^{i-1}$, and we write $$\delta(x_1, \dots, x_n)= \det V(x_1, \dots, x_n)= \prod_{s<t}(x_t-x_s)$$ for the Vandermonde determinant.

  A (multivariate) polynomial with integer coefficients is \emph{primitive} if the greatest common divisor of its coefficients is 1.  This is equivalent to saying that it is not the zero polynomial over any field.


  \section{The first triangulant}
  \subsection*{Definition and main theorem}
\begin{Def}Let $R$ be a commutative ring with 1. For two matrices \hbox{$A, B\in M_n(R)$,} we consider the matrix $M(A,B)\in M_{n^2}(R)$  consisting of $n\times n$ blocks, the $(i,j)$-th block being $A^{j-1}B^{i-1}$  $(i,j=1, \dots, n)$.  The \emph{(first) triangulant} of $A$ and $B$ is $T(A,B)=\det M(A, B)$.
\end{Def}

Observe that the $(i,j)$-th block of $M\left(A^\top, B^\top\right)$ is $$\left(A^\top\right)^{j-1}\left(B^\top\right)^{i-1}=(B^{i-1}A^{j-1})^\top,$$ whence $M\left(A^\top, B^\top\right)=M(B,A)^\top$ and thus $T\left(A^\top, B^\top\right)=T(B,A).$

Observe also that if the entries of $A$ and $B$ are viewed as (commuting) indeterminates, then $T(A, B)$ is a bihomogeneous polynomial with integer coefficients in those $n^2+n^2$ indeterminates. Each monomial appearing in $T(A,B)$ has degree $n\binom n 2$ in each of both sets of $n^2$ indeterminates.

Note that $T\left(P^{-1}AP,P^{-1}BP\right)=T(A,B) $  for all $P\in GL_n(R)$.

Let $K$ be a field and $A, B\in M_n(K)$.  Consider the following five properties.
\begin{enumerate}
\item[$(\bot)$]\label{co} $A$ has an eigen-covector $u$ and $B$ has an eigenvector $v$ such that $uv=0$.
\item[$(\subset)$]\label{inv} $B$ has an eigenvector contained in a one-codimensional invariant subspace of $A$.

\item[$(\triangle)$]\label{mtx}  $n\ge 2$, and there exists a matrix $P\in GL_n(K)$ such that $$P^{-1}AP=\begin{pmatrix} A' & *\\ 0 &a\end{pmatrix} \qquad  {\textrm {and}} \qquad  P^{-1}BP=\begin{pmatrix} b & *\\ 0 &B'\end{pmatrix},$$ where $a,b\in K$ and $A', B'\in M_{n-1}(K)$.

\item[$(<)$]\label{kicsi} $B$ has an eigenvector $v$ such that $\dim K[A]v<n$.

\item[$(\top)$]\label{nulla} $T(A, B)=0$.
\end{enumerate}

The name `triangulant' is justified by the relation between properties $(\triangle)$ and $(\top)$. The main result of this section is

\begin{Th}\label{1}

\begin{enumerate}
\item[(a)] $(\bot)\Longleftrightarrow(\subset)\Longleftrightarrow(\triangle)\implies(<)\implies(\top)$

\item[(b)] If $K$ contains all eigenvalues of $A$, then the first four properties are equivalent.

\item[(c)]  If $K$ contains all eigenvalues of $A$ and $B$, then all five  properties are equivalent.
\end{enumerate}
\end{Th}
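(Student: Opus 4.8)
The plan is to establish the easy equivalences and implications of part (a) first, then supply the one extra implication needed for (b) and the harder one needed for (c).

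\emph{Part (a).} The equivalence $(\bot)\Leftrightarrow(\subset)$ is just a translation between an eigen-covector $u$ of $A$ and the invariant hyperplane $\ker u$: if $uA=\la u$ then $A\ker u\subseteq\ker u$; conversely any $A$-invariant hyperplane is $\ker u$ for some $u\ne0$, and $A$-invariance forces $uA$ to vanish on $\ker u$, hence $uA=\la u$. The equivalence $(\subset)\Leftrightarrow(\triangle)$ is obtained by choosing coordinates: given a $B$-eigenvector $v$ inside an $A$-invariant hyperplane $W$, take a basis $e_1=v,e_2,\dots,e_{n-1}$ of $W$ and extend to a basis of $K^n$; conjugating by the resulting $P$ puts $A$, $B$ into the stated block forms (and $n\ge2$ since $W$ is a proper subspace containing $v\ne0$), while conversely the block forms exhibit $e_1$ as a $B$-eigenvector lying in the $A$-invariant hyperplane $\langle e_1,\dots,e_{n-1}\rangle$. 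Next, $(\subset)\Rightarrow(<)$ because $K[A]v\subseteq W$ has dimension $\le n-1$. Finally, for $(<)\Rightarrow(\top)$: if $Bv=\mu v$ and $\sum_{j=1}^n c_jA^{j-1}v=0$ with $(c_j)\ne0$, then the nonzero vector of $K^{n^2}$ whose $j$-th block is $c_jv$ lies in $\ker M(A,B)$, since its $i$-th block is $\sum_j A^{j-1}B^{i-1}c_jv=\mu^{i-1}\sum_jc_jA^{j-1}v=0$; hence $T(A,B)=\det M(A,B)=0$.

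\emph{Part (b).} It remains to prove $(<)\Rightarrow(\subset)$ when $K$ contains all eigenvalues of $A$. Given a $B$-eigenvector $v$ with $W_0:=K[A]v$ of dimension $<n$, the quotient $K^n/W_0$ is nonzero, and $A$ acts on it with characteristic polynomial dividing that of $A$, hence splitting over $K$; so this action is triangularizable and admits an invariant hyperplane, whose preimage in $K^n$ is an $A$-invariant hyperplane containing $v$. This is $(\subset)$.

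\emph{Part (c).} The task is to deduce one of the first four properties from $(\top)$. Introduce the linear map $\phi\colon (K^n)^n\to K^n$, $\phi(z_1,\dots,z_n)=\sum_{j=1}^nA^{j-1}z_j$, and the block-diagonal matrix $\tilde B=B\oplus\dots\oplus B\in M_{n^2}(K)$ ($n$ summands). For $\mathbf w\in K^{n^2}$ the $i$-th block of $M(A,B)\mathbf w$ equals $\phi\big(\tilde B^{\,i-1}\mathbf w\big)$; since $\tilde B$ satisfies the degree-$n$ characteristic polynomial of $B$, it follows that $M(A,B)\mathbf w=0$ iff $\phi$ vanishes on the $\tilde B$-invariant subspace $K[\tilde B]\mathbf w$. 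Therefore
$$T(A,B)=0\iff \ker\phi\text{ contains a nonzero }\tilde B\text{-invariant subspace.}$$
When $K$ contains all eigenvalues of $B$ — equivalently of $\tilde B$ — such a nonzero invariant subspace contains an eigenvector of $\tilde B$, and the eigenvectors of $\tilde B=B^{\oplus n}$ are exactly the tuples $(z_1,\dots,z_n)\ne0$ whose nonzero entries all lie in a single eigenspace $E_\mu(B)$. Hence $(\top)$ yields an eigenvalue $\mu$ of $B$ and vectors $z_1,\dots,z_n\in E_\mu(B)$, not all zero, with $\sum_jA^{j-1}z_j=0$.

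From this data I will manufacture a $B$-eigenvector that is not cyclic for $A$, i.e.\ property $(<)$, which closes the circle via (a) and (b). If $A$ is derogatory then no vector is cyclic and any eigenvector of $B$ works (one exists since the eigenvalues of $B$ lie in $K$). Otherwise every eigenvalue $\la$ of $A$ has geometric multiplicity $1$, so each $\im(A-\la I)$ is an $A$-invariant hyperplane consisting of non-cyclic vectors. If $\dim E_\mu(B)\ge2$, then $E_\mu(B)\cap\im(A-\la I)\ne0$ for any eigenvalue $\la$ of $A$ by a dimension count, and a nonzero vector there is the desired non-cyclic $B$-eigenvector. If $\dim E_\mu(B)=1$, write $E_\mu(B)=Kv$ and $z_j=c_jv$; then $(c_j)\ne0$ and $\sum_jc_jA^{j-1}v=0$, so $v$ itself is not cyclic. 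I expect the reshaping identity $M(A,B)\mathbf w=0\Leftrightarrow\phi|_{K[\tilde B]\mathbf w}=0$, and the concluding case analysis in (c) where one must extract a genuine eigenvector of $B$ (not just a tuple lying in an eigenspace) that is non-cyclic for $A$, to be the main obstacle; everything else is routine linear algebra.
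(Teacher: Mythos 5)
Your proposal is correct. Parts (a) and (b) coincide with the paper's argument (the paper leaves $(\subset)\Leftrightarrow(\triangle)$ and $(\subset)\Rightarrow(<)$ as exercises and proves (b) exactly by embedding $K[A]v$ in a one-codimensional invariant subspace, which you spell out via the quotient action). In part (c) you reach the same key intermediate data as the paper --- vectors $z_1,\dots,z_n$, not all zero, lying in a single eigenspace $E_\mu(B)$ and satisfying $\sum_j A^{j-1}z_j=0$ --- but by a different mechanism and with a different endgame. The paper gets there by choosing a divisor $f$ of the minimal polynomial $m_B$ of maximal degree with the $f(B)v_j$ not all zero, whereas you reformulate $\ker M(A,B)$ as the set of $\mathbf w$ with $K[\tilde B]\mathbf w\subseteq\ker\phi$ for $\tilde B=B^{\oplus n}$ and extract an eigenvector of $\tilde B$ inside this invariant subspace using that its characteristic polynomial splits over $K$; the two devices are interchangeable, and yours isolates a clean coordinate-free statement ($T(A,B)=0$ iff $\ker\phi$ contains a nonzero $\tilde B$-invariant subspace) at the cost of working in $K^{n^2}$. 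The finish also differs: when $\dim E_\mu(B)\ge 2$ the paper concludes $(\subset)$ directly by intersecting the eigenspace with any one-codimensional $A$-invariant subspace, while you insist on producing $(<)$ in every case, splitting on whether $A$ is derogatory and, in the nonderogatory case, intersecting $E_\mu(B)$ with the $(n-1)$-dimensional $A$-invariant subspace $\im(A-\la I)$; this costs an extra case but yields the slightly sharper conclusion that $(\top)$ always produces a $B$-eigenvector that is non-cyclic for $A$. All steps check out, including the use of the hypothesis that $K$ contains the eigenvalues of $A$ (for the eigenvalue $\la$ and for part (b)) and of $B$ (for extracting the $\tilde B$-eigenvector and for the derogatory case).
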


\begin{proof}
(a) $(\bot)\Longleftrightarrow(\subset)$: The one-codimensional invariant subspaces of $A$ are precisely the subspaces of the form $u^\bot=\{v\in K^n:uv=0\}$, where $u$ is an eigen-covector of $A$. 

The implications $(\subset)\Longleftrightarrow(\triangle)$ and  $(\subset)\implies(<)$ are easy exercises left to the reader.

$(<)\implies(\top)$: The $n$ vectors  $v$, $Av$, $A^2v$, \dots, $A^{n-1}v$ are contained in $K[A]v$ and are therefore linearly dependent.
Thus, $$\sum_{j=1}^n a_jA^{j-1}v=0$$ holds with some  coefficients $a_j\in K$, not all zero. 


Furthermore,  $v$ is an eigenvector of $B$, i.e.,  $v\ne 0$ and $Bv=cv$ for some $c\in K$.
Therefore, we have  $$\sum_{j=1}^{n}A^{j-1}B^{i-1}a_jv=\sum_{j=1}^{n}A^{j-1}c^{i-1}a_jv=c^{i-1}\sum_{j=1}^{n}a_jA^{j-1}v=0$$ for all $i=1, \dots, n$.
This means that the $n^2$-dimensional vector formed by concatenating the $n$ vectors $a_1v$, \dots, $a_nv$ is a nonzero vector in the kernel of $M(A, B)$, whence  $T(A, B)=\det M(A,B)=0$.

(b) Assuming $(<)$, the $A$-invariant proper subspace $K[A]v$  is contained in a one-codimensional $A$-invariant subspace, proving $(\subset)$.

(c) Assuming $(\top)$, there exist vectors $v_1, \dots, v_n\in K^n$, not all zero, such that $$\sum_{j=1}^nA^{j-1}B^{i-1}v_j=0$$  for all $i=1, \dots, n$. This means that $$\sum_{j=1}^nA^{j-1}f(B)v_j=0$$ for all polynomials $f\in K[x]$. Let $f$ be a divisor of the minimal polynomial $m_B$ such that the vectors $f(B)v_j\in K^n$ are not all zero and $f$ has maximal degree under this property. Let $\lambda \in K$ be  a root of the quotient polynomial $m_B/f$.  Then all nonzero vectors $f(B)v_j$ are eigenvectors of $B$ with eigenvalue $\lambda $. If they are all scalar multiples of an eigenvector $v$, then \hbox{$\dim K[A]v<n$,} establishing $(<)$. Otherwise, the eigenspace of $B$ corresponding to the eigenvalue $\lambda $ has dimension $>1$ and therefore nontrivially intersects any one-codimensional invariant subspace of $A$, establishing $(\subset)$.
\end{proof}
The case when $B$ has an eigenvalue with geometric multiplicity $>1$, appearing in the last step of the proof of Theorem~\ref{1}, deserves more detailed analysis. This will be useful in Section~\ref{higher}, when we study triangulants of higher order.

\begin{Lemma}\label{2dim}
If $K$ is a field and $A, B\in M_n(K)$,  then $$\dim\ker M(A, B)\ge n\sum (m-1),$$ where $m$ runs over the  geometric multiplicities of distinct eigenvalues of $B$.
\end{Lemma}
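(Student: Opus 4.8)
The plan is to exhibit an explicit subspace of $\ker M(A,B)$ of the required dimension, assembled independently from the eigenspaces of $B$. First I would recall, exactly as in the proof of Theorem~\ref{1}(c), that $\ker M(A,B)$ is naturally identified with the space of $n$-tuples $(v_1,\dots,v_n)\in(K^n)^n$ satisfying $\sum_{j=1}^n A^{j-1}B^{i-1}v_j=0$ for $i=1,\dots,n$. The key observation is that if we restrict all the components $v_j$ to a single eigenspace of $B$, these $n$ block-equations collapse into one.

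Concretely, fix an eigenvalue $\lambda$ of $B$ in $K$, let $E_\lambda=\ker(B-\lambda I)$ and $m=\dim E_\lambda$. For a tuple with every $v_j\in E_\lambda$ we have $B^{i-1}v_j=\lambda^{i-1}v_j$, so $\sum_j A^{j-1}B^{i-1}v_j=\lambda^{i-1}\sum_j A^{j-1}v_j$, and the $n$ kernel equations reduce to the single equation $\sum_j A^{j-1}v_j=0$. Hence $$W_\lambda:=\Bigl\{(v_1,\dots,v_n)\in E_\lambda^{\,n}:\ \sum_{j=1}^n A^{j-1}v_j=0\Bigr\}\subseteq\ker M(A,B).$$ Since $W_\lambda$ is the kernel of the linear map $E_\lambda^{\,n}\to K^n$ sending $(v_j)$ to $\sum_j A^{j-1}v_j$, rank–nullity gives $\dim W_\lambda\ge nm-n=n(m-1)$.

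Finally I would check that, as $\lambda$ ranges over the distinct eigenvalues of $B$, the subspaces $W_\lambda$ are independent. Indeed, if $\sum_\lambda w_\lambda=0$ with $w_\lambda=(v_1^\lambda,\dots,v_n^\lambda)\in W_\lambda$, then for each fixed $j$ the vectors $v_j^\lambda\in E_\lambda$ sum to $0$; since eigenspaces belonging to distinct eigenvalues are linearly independent, every $v_j^\lambda$ is $0$, so each $w_\lambda$ is $0$. Therefore $\dim\ker M(A,B)\ge\sum_\lambda\dim W_\lambda\ge n\sum(m-1)$, as claimed. I do not expect a genuine obstacle here: the only points needing (minor) care are the nullity estimate for $W_\lambda$ and the directness of $\sum_\lambda W_\lambda$. (If one prefers to count eigenvalues of $B$ in $\bar K$ rather than in $K$, one may first extend scalars, as $\dim\ker M(A,B)$ is unchanged under field extension, and then run the same argument over $\bar K$.)
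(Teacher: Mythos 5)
Your proof is correct and is essentially the paper's own argument: you restrict the block map $\begin{pmatrix}I & A & \cdots & A^{n-1}\end{pmatrix}$ to $E_\lambda^{\oplus n}$, bound its kernel by rank--nullity, and observe that these kernels sit inside $\ker M(A,B)$ and are independent across distinct eigenvalues. The only difference is that you spell out the verification of containment and directness, which the paper leaves implicit.
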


\begin{proof}Let $V\le K^n$ be an eigenspace of $B$. 
 Consider the \hbox{$K^{n^2}\to K^n$} linear map whose matrix has block form $\begin{pmatrix}I & A & A^2 &\cdots & A^{n-1}\end{pmatrix}$, and restrict it to the subspace $V^{\oplus n}\le (K^n)^{\oplus n}=K^{n^2}$.  The  kernel of this restriction  has dimension $$\ge \dim V^{\oplus n}-\dim K^n=n(\dim V-1).$$ The  direct sum of all such kernels, where $V$ runs over all eigenspaces of $B$, is contained in the kernel of $M(A,B)$.
\end{proof}

\subsection*{Diagonalizable matrices} We wish to factorize the triangulant in the important special case when $B$ is a diagonal matrix.  For an $n$-square matrix  $A$, let $M_t(A)$  be the matrix whose columns are the $t$-th columns of $I$, $A$, \dots, $A^{n-1}$. Let $$\Delta_t(A)=\det M_t(A); \qquad \Delta(A)=\prod_{t=1}^n\Delta_t(A).$$

\begin{Pro}\label{diag}If $B=\diag(b_1, \dots, b_n)$, then $$T(A,B)=(-1)^{\lfloor n/2\rfloor}\Delta(A)\delta(b_1, \dots, b_n)^n.$$
\end{Pro}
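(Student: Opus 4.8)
The plan is to compute $\det M(A,B)$ when $B=\diag(b_1,\dots,b_n)$ by exploiting the block structure. Write $M=M(A,B)$ as an $n\times n$ array of $n\times n$ blocks, the $(i,j)$-block being $A^{j-1}B^{i-1}$. Since $B$ is diagonal, $B^{i-1}=\diag(b_1^{i-1},\dots,b_n^{i-1})$, so right-multiplication by $B^{i-1}$ simply scales the $t$-th column of $A^{j-1}$ by $b_t^{i-1}$. Thus the $(i,j)$-block is obtained from $A^{j-1}$ by these column scalings. The key idea is to \emph{regroup the columns of $M$ by the index $t\in\{1,\dots,n\}$}: for fixed $t$, collect, from every block in block-column $j$, the single column that is the $t$-th column of $A^{j-1}B^{i-1}$. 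After this permutation of the $n^2$ columns of $M$, the matrix becomes block-diagonal-up-to-sign with $n$ blocks, the $t$-th block being an $n^2\times n^2$ matrix — wait, rather: I would permute \emph{both} rows and columns so that $M$ becomes a direct sum. Concretely, the $(i,t)$-indexed row-group and $(j,t)$-indexed column-group should be arranged so that the entry depends on $t$ only through the scalar $b_t^{i-1}$ and the matrix $M_t(A)$ whose columns are the $t$-th columns of $I,A,\dots,A^{n-1}$.

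The cleanest route: observe that the whole matrix $M$ factors as a product. Let $D$ be the $n^2\times n^2$ block-diagonal matrix whose $i$-th diagonal block is $B^{i-1}=\diag(b_1^{i-1},\dots)$, and let $N$ be the $n^2\times n^2$ matrix whose $(i,j)$-block is $A^{j-1}$ (independent of $i$ — so $N$ has constant block-rows). Then $M(A,B)=D\cdot N$? Let me check: the $(i,j)$-block of $DN$ is $B^{i-1}A^{j-1}$, not $A^{j-1}B^{i-1}$. That is $M(B,A)^\top$-flavoured rather than what we want, but since $B$ is diagonal we instead want $A^{j-1}B^{i-1}$; so factor the other way: $M(A,B)=N'\cdot D'$ where $D'$ has $i$-th row-block $B^{i-1}$ applied on the right — this doesn't literally work as a block product either. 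The honest approach is the column regrouping: apply a permutation matrix $P$ to the columns so that $MP$ is block-diagonal with $t$-th block equal to $V(b_1,\dots,b_n)\otimes(\text{column }t\text{ data})$. After chasing the combinatorics one finds $MP$ is a direct sum over $t$ of matrices each of which is a "Vandermonde in $b_t$" tensored against $M_t(A)$, and each such $n^2\times n^2$ summand (wait — each summand is $n\times n$ in the $t$-index but $n^2$ total) has determinant $\Delta_t(A)\cdot\delta(b_1,\dots,b_n)^{?}$.

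Let me restate the endgame more carefully. Index rows of $M$ by pairs $(i,s)$ ($i$ = block-row, $s$ = coordinate within block) and columns by pairs $(j,t)$. The $((i,s),(j,t))$-entry is $\bigl(A^{j-1}B^{i-1}\bigr)_{s,t} = (A^{j-1})_{s,t}\,b_t^{i-1}$. Now permute: group rows by $s$ then $i$, and columns by $t$ then $j$. In the new ordering the entry at row-group $s$, column-group $t$ is the $n\times n$ matrix with $(i,j)$-entry $(A^{j-1})_{s,t}b_t^{i-1} = b_t^{i-1}(A^{j-1})_{s,t}$. This is $V(b_t)$-row-scaled version, i.e. the $n\times n$ block equals $W(b_t)\cdot\operatorname{diag}$-ish — precisely it is the matrix $\bigl(b_t^{i-1}(A^{j-1})_{s,t}\bigr)_{i,j}$, which as $s$ varies is not block-diagonal. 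Hmm — so the clean factorization is: this equals $C(b_t)\cdot R_s$ where $C(b_t)=\operatorname{diag}(1,b_t,\dots,b_t^{n-1})$ acting on the $i$-index and $R_s$ has $(i,j)$... no, $R$ must not depend on $i$. Indeed $\bigl(b_t^{i-1}(A^{j-1})_{s,t}\bigr)_{i,j} = \bigl(b_t^{i-1}\bigr)_i\cdot\bigl((A^{j-1})_{s,t}\bigr)_j$ is a \emph{rank-one} outer product! So after the permutation, $M$ becomes a block matrix whose $(s,t)$ block is the rank-one matrix $\mathbf{v}(b_t)\,\mathbf{w}_{s,t}^\top$ where $\mathbf{v}(b_t)=(1,b_t,\dots,b_t^{n-1})^\top$ and $\mathbf{w}_{s,t}=(1,(A)_{s,t},(A^2)_{s,t},\dots,(A^{n-1})_{s,t})^\top$. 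Therefore, permuting columns to group by $j$ first: $M$ is (up to the sign of the permutation) the product of (i) the $n^2\times n^2$ matrix $V(b_1,\dots,b_n)\otimes I_n$ — no: the $i$-dependence through $\mathbf v(b_t)$ and the $j$-dependence through $\mathbf w_{s,t}$ decouple, giving $M \sim \bigl(\text{matrix }[\mathbf v(b_1)|\cdots|\mathbf v(b_n)]\otimes\text{something}\bigr)\cdot\bigl(\bigoplus_t M_t(A)\bigr)$ after suitable reindexing. Taking determinants yields $T(A,B)=\pm\,\delta(b_1,\dots,b_n)^n\prod_{t=1}^n\Delta_t(A)=\pm\,\Delta(A)\,\delta(b_1,\dots,b_n)^n$.

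So the steps are: (1) write the generic entry of $M$ explicitly using $B$ diagonal; (2) reindex rows by $(s,i)$ and columns by $(t,j)$ and recognise the $(s,t)$-block as a rank-one product $\mathbf v(b_t)\mathbf w_{s,t}^\top$; (3) factor $M$, after a permutation of rows and columns, as a product (or tensor-like product) of a matrix built from the $\mathbf v(b_t)$'s — contributing $\delta(b_1,\dots,b_n)^n$ — and a block-diagonal matrix $\bigoplus_t M_t(A)$ — contributing $\prod_t\Delta_t(A)=\Delta(A)$; (4) compute the sign of the row/column permutation used and verify it equals $(-1)^{\lfloor n/2\rfloor}$. The main obstacle is Step (3)–(4): making the decoupling into a genuine matrix identity precise (the "tensor-like product" has to be spelled out as an honest product of two $n^2\times n^2$ matrices over the polynomial ring, e.g. $M = \Pi_{\mathrm{row}}\,\bigl((V(b_1,\dots,b_n))\otimes I_n\bigr)\,\bigl(\bigoplus_t M_t(A)\bigr)\,\Pi_{\mathrm{col}}$ for explicit permutations $\Pi$), and then carefully tracking the permutation sign to pin down $(-1)^{\lfloor n/2\rfloor}$ — this is the kind of bookkeeping that is easy to get off by a sign, so I would double-check it on $n=2$ and $n=3$, where $T$ can be computed by hand. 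Everything else (bihomogeneity, degrees, the Vandermonde determinant formula) is already available from the material preceding the proposition.
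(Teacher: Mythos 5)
Your factorization idea is the same as the paper's: with $B=\diag(b_1,\dots,b_n)$, the entry of $M(A,B)$ in row $(i,s)$ (block row $i$, row $s$ inside) and column $(j,t)$ is $(A^{j-1})_{s,t}\,b_t^{i-1}$, and regrouping the columns by $t$ turns $M(A,B)$ into the product $(V(b_1,\dots,b_n)\otimes I)\,\diag(M_1(A),\dots,M_n(A))$, whose determinant is $\delta(b_1,\dots,b_n)^n\Delta(A)$. In fact your hesitation about rows is unnecessary: no row permutation is needed at all. Keeping the rows in their original order $(i,s)$ and sending column $(j,t)$ to position $(t,j)$ already yields exactly that product, as one checks by comparing entries, since the $(i,t)$-block of $(V\otimes I)\diag(M_1(A),\dots,M_n(A))$ is $b_t^{i-1}M_t(A)$, whose $(s,j)$-entry is $b_t^{i-1}(A^{j-1})_{s,t}$.

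The genuine gap is your step (4), the sign. The statement asserts a specific factor $(-1)^{\lfloor n/2\rfloor}$, and your proposal neither fixes the permutation explicitly nor computes its signature; ``double-check it on $n=2$ and $n=3$'' cannot establish the sign for general $n$, and different choices of row/column permutations realizing the same block-diagonal shape give different signs, so the bookkeeping is not optional. The paper closes this in one line: the column permutation is the involution $qn+r+1\leftrightarrow rn+q+1$ with $q,r\in\{0,1,\dots,n-1\}$ (swap the block index with the index inside the block); its nontrivial orbits are the pairs with $q\ne r$, so it is a product of $\binom n2$ disjoint transpositions and has sign $(-1)^{\binom n2}$, and $\binom n2=n(n-1)/2\equiv\lfloor n/2\rfloor \pmod 2$, which is precisely the sign in the Proposition. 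Supplying this explicit permutation and the parity computation is what your sketch still needs to become a proof; everything else in your plan matches the paper's argument.
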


\begin{proof}Permute the columns of the $n^2\times n^2$ matrix $M(A,B)$ by the involution  $qn+r+1\leftrightarrow rn+q+1$, where \hbox{$q,r\in\{0,1,\dots, n-1\}$.} This involution is  a product of $\binom n2$ (disjoint) transpositions, and  $\binom n2\equiv \lfloor n/2\rfloor \mod 2$, producing the sign that appears in the Proposition. The resulting matrix is a  product $(V(b_1, \dots, b_n)\otimes I)\diag(M_1(A), \dots, M_n(A))$, where the first factor is  a Kronecker product and the second factor is a block diagonal matrix. We then have $$T(A,B)=\det M(A, B)=(-1)^{\lfloor n/2\rfloor}\det V(b_1, \dots, b_n)^n\prod_{t=1}^n\det M_t(A)$$ and the Proposition follows.
\end{proof}

The polynomial $\Delta(A)$ was defined as a product of $n$ factors. When $A$ is diagonalizable, those factors can be themselves  factorized:

\begin{Lemma}\label{Delta} For the matrix ${A=P^{-1}\Lambda  P,}$ where $\Lambda =\diag(a_1, \dots, a_n)$ and   \hbox{$P=(p_{st})\in GL_n(K)$,}  we have
\begin{equation}\label{Deltat}\Delta_t(A
)=\delta(a_1, \dots, a_n)\frac1{\det P}\prod_{s=1}^np_{st}\qquad (t=1, \dots, n);\end{equation}
\begin{equation}\label{Deltaprod}\Delta(A
)=\delta(a_1, \dots, a_n)^n\frac1{\det P^n}\prod_{s=1}^n\prod_{t=1}^np_{st}.\end{equation}
\end{Lemma}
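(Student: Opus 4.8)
The plan is to compute the matrix $M_t(A)$ explicitly from the diagonalization and then factor it. Writing $e_t$ for the $t$-th standard basis vector, the $t$-th column of $A^{k}$ is $A^{k}e_t = P^{-1}\Lambda^{k}Pe_t = P^{-1}\Lambda^{k}c$, where $c=Pe_t$ is the $t$-th column of $P$, so that $c_s=p_{st}$. Hence the matrix $M_t(A)$, whose columns are $e_t,Ae_t,\dots,A^{n-1}e_t$, factors as $M_t(A)=P^{-1}N_t$, where $N_t$ denotes the matrix with columns $c,\Lambda c,\dots,\Lambda^{n-1}c$.

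The key observation is then that $N_t$ itself factors. Its $(s,k+1)$-entry is $a_s^{k}c_s=a_s^{k}p_{st}$, so $N_t=\diag(p_{1t},\dots,p_{nt})\,V(a_1,\dots,a_n)^{\top}$. Taking determinants, and using $\det V(a_1,\dots,a_n)^{\top}=\delta(a_1,\dots,a_n)$, yields
$$\Delta_t(A)=\det M_t(A)=\frac{1}{\det P}\Bigl(\prod_{s=1}^{n}p_{st}\Bigr)\delta(a_1,\dots,a_n),$$
which is \eqref{Deltat}. Multiplying these $n$ identities over $t=1,\dots,n$ and recalling that $\Delta(A)=\prod_{t=1}^{n}\Delta_t(A)$ gives \eqref{Deltaprod} at once.

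I do not anticipate any real obstacle; this is a short direct computation. The only points requiring care are the index bookkeeping — in particular, recognising that it is $V(a_1,\dots,a_n)^{\top}$ (rather than $V$ itself) that appears as the Vandermonde factor, since in $N_t$ the powers of a given $a_s$ run along a \emph{row}, and that $c=Pe_t$ is the $t$-th \emph{column} of $P$, with entries $p_{st}$ as $s$ varies.
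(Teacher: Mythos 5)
Your proof is correct and follows essentially the same route as the paper: both factor $M_t(A)=P^{-1}N_t$ with $N_t$ having columns $\Lambda^{j}Pe_t$, and then evaluate $\det N_t$ as $\bigl(\prod_s p_{st}\bigr)\delta(a_1,\dots,a_n)$ (you simply make explicit the final factorization $N_t=\diag(p_{1t},\dots,p_{nt})V(a_1,\dots,a_n)^{\top}$, which the paper leaves implicit). No gaps.
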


\begin{proof}
 The $t$-th column of $\Lambda^jP$ is $$\left(a_1^jp_{1t}, \dots, a_n^jp_{nt}\right)^\top=:v_j.$$ 
 Therefore, the $t$-th column of the matrix $A^j=P^{-1}\Lambda^j  P$  is the vector $P^{-1}v_j$. This is the $(j+1)$-th column of $M_t(A)$, whence we have $$M_t(A)=P^{-1}\cdot\begin{pmatrix} v_0& \cdots& v_{n-1}\end{pmatrix}.$$  Thus, $$\Delta_t(A)=\det M_t(A)=\det P^{-1}\cdot\det \begin{pmatrix} v_0& \cdots& v_{n-1}\end{pmatrix}$$ and the identity \eqref{Deltat} follows.

The identity \eqref{Deltaprod}  is immediate from \eqref{Deltat}.
\end{proof}
This  yields a product formula for the triangulant of two (non-simul\-ta\-ne\-ous\-ly) diagonalizable matrices:

\begin{Cor}\label{diagdiag} For $A$ as in Lemma~\ref{Delta} 
and $B=\diag(b_1, \dots, b_n)$, we have $$T(A, B)=(-1)^{\lfloor n/2\rfloor}\delta(a_1, \dots, a_n)^n\delta(b_1, \dots, b_n)^n\frac1{\det P^n}\prod_{s=1}^n\prod_{t=1}^np_{st}.$$  
\end{Cor}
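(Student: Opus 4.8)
This corollary is essentially a bookkeeping exercise combining two results already established: Proposition~\ref{diag}, which factorizes $T(A,B)$ as $(-1)^{\lfloor n/2\rfloor}\Delta(A)\delta(b_1,\dots,b_n)^n$ when $B$ is diagonal, and Lemma~\ref{Delta}, which factorizes $\Delta(A)$ itself when $A=P^{-1}\Lambda P$ is diagonalizable. So the plan is simply to substitute one into the other.

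\begin{proof}
By Proposition~\ref{diag}, since $B=\diag(b_1,\dots,b_n)$, we have
$$T(A,B)=(-1)^{\lfloor n/2\rfloor}\Delta(A)\,\delta(b_1,\dots,b_n)^n.$$
By the identity \eqref{Deltaprod} in Lemma~\ref{Delta}, applied to $A=P^{-1}\Lambda P$ with $\Lambda=\diag(a_1,\dots,a_n)$ and $P=(p_{st})$,
$$\Delta(A)=\delta(a_1,\dots,a_n)^n\,\frac{1}{\det P^n}\,\prod_{s=1}^n\prod_{t=1}^np_{st}.$$
Substituting this expression for $\Delta(A)$ into the formula for $T(A,B)$ yields
$$T(A,B)=(-1)^{\lfloor n/2\rfloor}\,\delta(a_1,\dots,a_n)^n\,\delta(b_1,\dots,b_n)^n\,\frac{1}{\det P^n}\,\prod_{s=1}^n\prod_{t=1}^np_{st},$$
as claimed.
\end{proof}

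There is really no obstacle here; the only thing to be mindful of is that Proposition~\ref{diag} was proved with $B$ literally diagonal (not merely diagonalizable), which is exactly the hypothesis of the corollary, so the substitution is legitimate. One could also remark that the formula is visibly symmetric in the roles played by $A$'s eigenvalues and $B$'s eigenvalues up to the factor involving $P$ — indeed, if one diagonalizes $B=Q^{-1}\diag(b_1,\dots,b_n)Q$ as well, conjugating everything by $Q$ reduces to the stated case with $A$ replaced by $QAQ^{-1}$ and eigenvector matrix $QP$ — but this is not needed for the proof as stated.
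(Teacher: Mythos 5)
Your proof is correct and coincides with the paper's own argument, which also just substitutes the factorization \eqref{Deltaprod} of $\Delta(A)$ from Lemma~\ref{Delta} into the formula of Proposition~\ref{diag} (the paper records this as ``immediate'' from those two results). Nothing further is needed.
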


\begin{proof}Immediate from   Proposition~\ref{diag} and Lemma~\ref{Delta}.
\end{proof}

\subsection*{Simultaneous triangularizability in dimension 2}
Recall that  the \emph{commutator} of matrices $A$ and $B$ is $[A,B]=AB-BA$.
\begin{Pro}\label{2} When $n=2$, we have $$T(A,B)=\det[A,B]=\tr A[A, B]B.$$
\end{Pro}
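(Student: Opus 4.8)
The plan is to verify the two claimed identities by direct computation in the $2\times 2$ case, then sanity-check them against the general structural facts already established. When $n=2$, the matrix $M(A,B)$ is the $4\times 4$ block matrix
$$M(A,B)=\begin{pmatrix} I & A\\ B & AB\end{pmatrix},$$
since the $(i,j)$-block is $A^{j-1}B^{i-1}$ for $i,j\in\{1,2\}$. So $T(A,B)=\det\begin{pmatrix} I & A\\ B & AB\end{pmatrix}$, and the first step is to reduce this $4\times 4$ determinant. The top-left block is invertible, so by the Schur complement formula $\det\begin{pmatrix} I & A\\ B & AB\end{pmatrix}=\det(AB-B\cdot I\cdot A)=\det(AB-BA)=\det[A,B]$. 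This disposes of the first equality with essentially no computation; the only thing to be careful about is that $\det\begin{pmatrix} I & A\\ B & AB\end{pmatrix}=\det(AB-BA)$ rather than $\pm$ that — but the $2\times 2$ Schur complement identity $\det\begin{pmatrix} I & A\\ B & D\end{pmatrix}=\det(D-BA)$ is exactly right with no sign, matching $(-1)^{\lfloor 2/2\rfloor}=-1$ being absorbed correctly (one should double-check the sign convention here, since Proposition~\ref{diag} carries a $(-1)^{\lfloor n/2\rfloor}$; for $n=2$ one can cross-check against Corollary~\ref{diagdiag}).

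For the second equality $\det[A,B]=\tr A[A,B]B$, the cleanest route is to use the fact that for any $2\times 2$ matrix $C$ one has $\det C=\tfrac12\big((\tr C)^2-\tr C^2\big)$, together with the observation that $\tr[A,B]=0$. Hence $\det[A,B]=-\tfrac12\tr[A,B]^2=-\tfrac12\tr\big((AB-BA)(AB-BA)\big)$. Expanding and using cyclicity of the trace, $\tr[A,B]^2=\tr(ABAB)-2\tr(ABBA)+\tr(BABA)=2\tr(ABAB)-2\tr(A^2B^2)$. On the other hand $\tr\big(A[A,B]B\big)=\tr(A(AB-BA)B)=\tr(A^2B^2)-\tr(ABAB)$. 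Comparing gives $\det[A,B]=-\tfrac12\big(2\tr(ABAB)-2\tr(A^2B^2)\big)=\tr(A^2B^2)-\tr(ABAB)=\tr\big(A[A,B]B\big)$, as required.

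I expect the main obstacle — such as it is — to be purely bookkeeping: getting the sign in the Schur-complement step to agree with the $(-1)^{\lfloor n/2\rfloor}$ normalization in the Definition and in Proposition~\ref{diag}, and keeping the trace expansions of the noncommutative products $ABAB$, $A^2B^2$, etc., straight under cyclic permutation. There is no conceptual difficulty: both identities are finite polynomial identities in the eight entries of $A$ and $B$, so in the worst case one simply expands everything in coordinates. The trace-based argument above is preferable because it makes the symmetry manifest and avoids a brute-force expansion.
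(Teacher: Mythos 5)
Your proof is correct and coincides with the paper's own alternative argument: your Schur-complement step is exactly the factorization $M(A,B)=\begin{pmatrix}I&0\\B&[A,B]\end{pmatrix}\begin{pmatrix}I&A\\0&I\end{pmatrix}$ used in the paper, and your trace computation reproduces the paper's combination of $\tr[A,B]^2=-2\tr A[A,B]B$ with $\tr^2X=\tr X^2+2\det X$ for $X=[A,B]$. Two minor remarks: the sign worry is moot, since $T(A,B)$ is defined as $\det M(A,B)$ with no $(-1)^{\lfloor n/2\rfloor}$ prefactor (that sign only arises from the column permutation in Proposition~\ref{diag}), and your division by $2$ is harmless because both equalities are identities between integer polynomials, so verifying them in characteristic $0$ suffices --- the same implicit step occurs in the paper's alternative proof.
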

\begin{proof}All three  sides of this identity are polynomials, and are invariant under simultaneous conjugation of $A$ and $B$, so when verifying it, one may assume that $B=\diag(b_1, b_2)$, and use Proposition~\ref{diag} to obtain that $$T(A, B)=a_{12}a_{21}(b_1-b_2)^2.$$ On the other hand, $$[A,B]=\begin{pmatrix} 0 &a_{12}(b_2-b_1)\\ a_{21}(b_1-b_2) & 0\end{pmatrix}$$ and the Proposition follows.

Alternatively, the first equality follows from the identity $$M(A,B)=\begin{pmatrix}I &A\\B&AB\end{pmatrix}=\begin{pmatrix}I &0\\B&[A,B]\end{pmatrix}\begin{pmatrix}I &A\\0&I\end{pmatrix},$$ and the second equality follows from the identities $\tr[A,B]=0$, \begin{equation}\label{matyi}\tr[A,B]^2=\tr [A,B]AB-\tr[A,B]BA=-2\tr A[A,B]B,\end{equation} and $\tr^2X=\tr X^2+2\det X$ applied to $X=[A,B]$.
\end{proof}
 The identity~\eqref{matyi} was pointed out to us by M\'aty\'as Domokos.
\begin{Cor} For $2\times 2$ matrices $A$ and $B$ over an algebraically closed field, the following are equivalent:
\begin{enumerate}
    \item
$A$ and $B$ have  a common eigenvector;
\item $\det[A,B]=0$;
\item $\tr A[A, B]B=0$;
\item $[A,B]^2=0$.
\end{enumerate}
\end{Cor}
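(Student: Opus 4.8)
The plan is to deduce this Corollary from Proposition~\ref{2} together with part (c) of Theorem~\ref{1}, specialized to $n=2$ over an algebraically closed field. Over such a field, all eigenvalues of both $A$ and $B$ lie in $K$, so Theorem~\ref{1}(c) gives the equivalence of all five properties $(\bot)$, $(\subset)$, $(\triangle)$, $(<)$, $(\top)$. Proposition~\ref{2} identifies $(\top)$, i.e. $T(A,B)=0$, with both $\det[A,B]=0$ and $\tr A[A,B]B=0$, which are statements (2) and (3) of the Corollary. So the bulk of the work is: first, to show that statement (1) --- existence of a common eigenvector --- is equivalent to one of the already-known conditions; and second, to bring condition (4), $[A,B]^2=0$, into the circle.

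For statement (1): when $n=2$, I would argue that ``$A$ and $B$ have a common eigenvector'' is equivalent to property $(<)$. Indeed, if $v$ is a common eigenvector, then $K[A]v = Kv$ has dimension $1 < 2 = n$, and $v$ is an eigenvector of $B$, so $(<)$ holds. Conversely, if $(<)$ holds with $n=2$, there is an eigenvector $v$ of $B$ with $\dim K[A]v < 2$, i.e. $\dim K[A]v = 1$ (it cannot be $0$ since $v\ne 0$); then $K[A]v = Kv$ is $A$-invariant, so $Av \in Kv$, meaning $v$ is also an eigenvector of $A$. Hence (1) $\Longleftrightarrow$ $(<)$, and by Theorem~\ref{1}(c) this closes the loop with (2) and (3).

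For condition (4): I would use equation~\eqref{matyi}, which over any field gives $\tr[A,B]^2 = -2\tr A[A,B]B$, and the Cayley--Hamilton identity $X^2 = (\tr X)X - (\det X)I$ for the $2\times 2$ matrix $X = [A,B]$. Since $\tr[A,B] = 0$, this reads $[A,B]^2 = -\det[A,B]\cdot I$. Therefore $[A,B]^2 = 0$ if and only if $\det[A,B] = 0$ (the field having characteristic possibly $2$ is no obstacle here, since we are setting a matrix equal to $0$, not dividing). This directly ties (4) to (2). Assembling: (1) $\Leftrightarrow$ $(<)$ $\Leftrightarrow$ $(\top)$ $\Leftrightarrow$ $\det[A,B]=0$ $\Leftrightarrow$ $\tr A[A,B]B = 0$ $\Leftrightarrow$ $[A,B]^2 = 0$, i.e. (1) $\Leftrightarrow$ (2) $\Leftrightarrow$ (3) $\Leftrightarrow$ (4).

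I do not anticipate a serious obstacle; the only mild subtlety is making sure the ``common eigenvector'' reformulation is phrased correctly --- in particular that a common eigenvector need not correspond to the same eigenvalue for $A$ and for $B$, which is exactly what property $(<)$ (together with $v$ being a $B$-eigenvector) encodes. A secondary point worth a sentence is that condition (4) could alternatively be reached by noting $[A,B]$ is traceless, hence nilpotent iff its determinant vanishes, which for a $2\times 2$ matrix is the same as $[A,B]^2 = 0$; I would include whichever phrasing is shortest. One should also remark that algebraic closedness is genuinely needed only to invoke Theorem~\ref{1}(c): the equivalences (2) $\Leftrightarrow$ (3) $\Leftrightarrow$ (4) hold over an arbitrary field.
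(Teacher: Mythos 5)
Your proposal is correct and follows essentially the same route as the paper: the equivalence of (1)--(3) via Theorem~\ref{1}(c) and Proposition~\ref{2} (you merely spell out the easy $n=2$ identification of ``common eigenvector'' with property $(<)$, which the paper leaves implicit), and (2)$\Longleftrightarrow$(4) via Cayley--Hamilton applied to $X=[A,B]$ together with $\tr[A,B]=0$. Your invocation of identity~\eqref{matyi} is not actually needed for step (4), but this is harmless.
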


\begin{proof} The equivalence of the first three properties is immediate from Theorem~\ref{1}(c) and Proposition~\ref{2}.

(2)$\Longleftrightarrow$(4)  follows from the Cayley--Hamilton identity $$X^2-(\tr X)X+(\det X)I=0$$ applied to $X=[A,B]$, together with the identity $\tr[A,B]=0$.
\end{proof}

Note that the implication (2)$\implies$(1) also follows from Laffey's Theorem~\cite{L, RR}: if, for  the $n$-square matrices $A$ and $B$ over an algebraically closed field, the commutator $[A,B]$ has rank (at most) 1, then $A$ and $B$ are simultaneously triangularizable.

\subsection*{Unitary matrices}
Let us now return to the case of a general dimension $n$, but specialize to the ground field $K=\C$. 

Recall that two orthonormal bases $u_1$, \dots, $u_n$  and $v_1$, \dots, $v_n$ of $\C^n$  are \emph{mutually unbiased} if $|u_s^*v_t|^2=1/n$ for all $s$ and $t$. Mutually unbiased bases were introduced by Schwinger~\cite{Sch} in 1960, and play an important role in quantum science.

\begin{Pro}For any two $n\times n$ unitary matrices $A$ and $B$, we have $$|T(A,B)|\le n^{n^2/2},$$ with equality if and only if $A$ and $B$ have no multiple eigenvalues, $A^n$ and $B^n$ are scalar matrices, and the orthonormal eigenbases  of $A$ and $B$ are mutually unbiased.     
\end{Pro}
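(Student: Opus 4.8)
The plan is to pass to orthonormal eigenbases and reduce to a determinant estimate, then identify the equality case via a Hadamard-type bound. Since $A$ is unitary, it is diagonalizable: write $A = P^{-1}\Lambda P$ with $\Lambda = \diag(a_1,\dots,a_n)$ and $P$ unitary (the rows of $P$, or columns of $P^{-1}=P^*$, forming an orthonormal eigenbasis of $A$); similarly $B = Q^{-1}DQ$ with $D=\diag(b_1,\dots,b_n)$ and $Q$ unitary. If $A$ has a multiple eigenvalue, then $(<)$ holds (take any eigenvector of $B$; if that fails we can still produce one in the relevant eigenspace), more directly by Lemma~\ref{2dim} applied after swapping roles — actually the cleanest route is: if $A$ or $B$ has a repeated eigenvalue, Lemma~\ref{2dim} (applied to $M(A,B)$ or, via $T(A^\top,B^\top)=T(B,A)$ and $|{\cdot}|$, to $M(B,A)$) forces $T(A,B)=0<n^{n^2/2}$, so the bound is strict. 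Hence we may assume both $A$ and $B$ have simple spectrum.

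Next I would use conjugation-invariance of $T$ to assume outright that $B = \diag(b_1,\dots,b_n)$ is diagonal (all $b_t$ distinct, each $|b_t|=1$), and that $A = P^{-1}\Lambda P$ with $\Lambda = \diag(a_1,\dots,a_n)$ ($a_s$ distinct, $|a_s|=1$) and $P$ unitary; here the rows of $P$ are the coordinates of the eigenvectors of $A$ expressed in the standard (= eigen-) basis of $B$, so $|p_{st}|$ are exactly the moduli of the inner products between the two eigenbases. Now invoke Corollary~\ref{diagdiag}:
$$T(A,B) = (-1)^{\lfloor n/2\rfloor}\,\delta(a_1,\dots,a_n)^n\,\delta(b_1,\dots,b_n)^n\,\frac1{\det P^n}\prod_{s,t} p_{st}.$$
Taking absolute values, and using $|\det P| = 1$, we get
$$|T(A,B)| = \bigl|\delta(a_1,\dots,a_n)\bigr|^{n}\,\bigl|\delta(b_1,\dots,b_n)\bigr|^{n}\,\prod_{s,t}|p_{st}|.$$
So the task splits into two independent estimates: $|\delta(a_1,\dots,a_n)| \le n^{n/2}$ for points $a_s$ on the unit circle (and likewise for the $b_t$), and $\prod_{s,t}|p_{st}| \le n^{-n^2/2}$ for a unitary matrix $P$.

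For the Vandermonde factor: $|\delta(a_1,\dots,a_n)|^2 = |\det V(a_1,\dots,a_n)|^2 = \det\bigl(V^*V\bigr)$, and $V^*V$ has $(s,t)$-entry $\sum_{k=0}^{n-1}(\bar a_s a_t)^{k}$, a matrix with diagonal entries $n$; by Hadamard's inequality $\det(V^*V)\le n^n$, so $|\delta|\le n^{n/2}$, with equality iff $V^*V$ is diagonal, i.e. $\sum_{k=0}^{n-1}(\bar a_s a_t)^k = 0$ for $s\ne t$, i.e. the ratios $a_t/a_s$ are distinct $n$-th roots of unity — equivalently $a_1^n = \dots = a_n^n$, i.e. $A^n$ is scalar. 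For the product of entries: for each column $t$, $\sum_s |p_{st}|^2 = 1$, so by AM–GM $\prod_s |p_{st}|^2 \le n^{-n}$ with equality iff $|p_{st}|^2 = 1/n$ for all $s$; multiplying over $t$ gives $\prod_{s,t}|p_{st}|^2 \le n^{-n^2}$, equality iff every $|p_{st}|^2 = 1/n$, which is precisely the statement that the two eigenbases are mutually unbiased. Combining, $|T(A,B)| \le (n^{n/2})^n \cdot (n^{n/2})^n \cdot n^{-n^2/2} = n^{n^2/2}$, and equality forces simple spectra (handled above), $A^n$ and $B^n$ scalar, and mutual unbiasedness; conversely all these conditions together make every inequality an equality. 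The main obstacle is bookkeeping rather than conceptual: one must make sure the "diagonalize $B$, write $A = P^{-1}\Lambda P$ with $P$ unitary" normalization is legitimate (it is, because both matrices being unitary lets us choose orthonormal eigenbases, and $T$ is invariant under the unitary conjugation carrying $B$'s eigenbasis to the standard basis) and that the $|p_{st}|$ are correctly identified with the cross inner products $|u_s^* v_t|$, with no stray inverses or conjugates affecting the equality analysis.
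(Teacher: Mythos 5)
Your proposal is correct and follows essentially the same route as the paper: unitarily diagonalize to reduce to Corollary~\ref{diagdiag}, then bound the two Vandermonde factors (maximized exactly when the eigenvalues form a rotated set of $n$-th roots of unity, i.e.\ simple spectrum with $A^n$, $B^n$ scalar) and bound $\prod_{s,t}|p_{st}|$ by AM--GM, with equality exactly at $|p_{st}|^2=1/n$, i.e.\ mutual unbiasedness. The only addition is that you prove the extremal Vandermonde bound via $|\delta|^2=\det(V^*V)$ and Hadamard's inequality, a detail the paper states without proof; your detour through Lemma~\ref{2dim} for repeated eigenvalues is unnecessary (the vanishing Vandermonde factor already gives strictness) but does no harm.
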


\begin{proof}Since unitary matrices are unitarily diagonalizable, we may assume that $A$ and $B$ are as in Corollary~\ref{diagdiag}, with $|a_s|=|b_s|=1$  for all $s$ and with $P$ unitary.
  Let $\epsilon
  $ be  a  primitive $n$-th root of unity.  Then $$|\delta(a_1, \dots, a_n)|\le\left|\delta\left(1, \epsilon, \dots, \epsilon^{n-1}\right)\right|=n^{n/2},$$ with equality if and only if $\{a_1, \dots, a_n\}=\left\{\rho, \rho\epsilon, \dots, \rho\epsilon^{n-1}\right\}$ for some $|\rho|=1$.  The analogous statement holds  for $\delta(b_1, \dots, b_n)$ as well.  Furthermore, $$\prod_{s=1}^n|p_{st}|\le\left(\frac1n\sum_{s=1}^n|p_{st}|^2\right)^{n/2}=n^{-n/2}$$ for each $t$, with equality if and only if $|p_{st}|^2=1/n$  for all $s$.  The Proposition now follows from Corollary~\ref{diagdiag}.
\end{proof}
Clearly, any orthonormal basis in $\C^n$ is the eigenbasis of some unitary matrix whose eigenvalues are the $n$-th roots of unity, each with multiplicity 1.
Thus, the notorious unsolved problem of determining the maximal number of mutually unbiased bases in a given dimension $n$ is equivalent to finding the maximal number of $n\times n$ unitary matrices such that their pairwise triangulants have absolute value $n^{n^2/2}$.
\section{Triangulants of higher order}\label{higher}
Throughout this section, $0\le k\le n$ are integers. The set $\{1,\dots, n\}$ is denoted by  $[n]$.  The set of $k$-element subsets of $[n]$ is denoted by $\binom{[n]} k$. It is considered as an ordered set with the lexicographic ordering.
\subsection*{Discriminants of higher order}
If $\la_1$, \dots, $\la_n$ are elements of a commutative ring, and $S\subseteq[n]$, then we write $$\la_S=\sum_{s\in S}\la_s.$$ For $r\ge 1$, we define the polynomial $\delta_r(\la_1, \dots, \la_n)$ to be the product
\begin{align*}
\prod\left(\la_T-\la_S: S, T\in{[n]\choose r}, S\cap T=\emptyset, \min S<\min T\right).\end{align*}
Then the Vandermonde determinant $\delta_1=\delta$ is an alternating polynomial, but $\delta_r$ for $r\ge 2$ is a  symmetric polynomial.

Using the elementary identity $$\la_T-\la_S=\la_{T\setminus S}-\la_{S\setminus T},$$ we obtain
$$\delta\left(\la_S: S\in{[n]\choose k}\right)=\prod_{r=1}^k \delta_r(\la_1, \dots, \la_n)^{{n-2r \choose k-r}}$$ and therefore \begin{equation}\label{kdelta}\delta\left(\la_S: S\in{[n]\choose k}\right)^{\binom nk}=\delta(\la_1, \dots, \la_n)^{\binom nk{n-2 \choose k-1}}\gamma_k(\la_1, \dots, \la_n),\end{equation} where $$\gamma_k(\la_1, \dots, \la_n)=\prod_{2\le r\le k} \delta_r(\la_1, \dots, \la_n)^{\binom nk{n-2r \choose k-r}}$$ is a  symmetric polynomial.  Observe that $\gamma_k=\gamma_{n-k}$.

If the matrix $A\in M_n(K)$ has  characteristic polynomial \begin{equation}\label{charpol}\det (xI-A)=\prod_{s=1}^n(x-\la_s),\end{equation} where the $\la_s$ are in an extension of $K$,
 then we write  $$D(A)=\delta(\la_1, \dots, \la_n)^2=\prod_{s<t}(\la_t-\la_s)^2,$$ $$D_1(A)=\begin{cases} D(A) \qquad (\cha K\ne 2)\\\sqrt{D(A)}\quad (\cha K= 2)\end{cases} $$
and, for $r\ge 2$, we write $$D_r(A)=\delta_r(\la_1, \dots, \la_n).$$
These expressions are symmetric in $\la_1$, \dots, $\la_n$, so we can think of $D_r$  $(r\ge 1)$  
as a conjugation-invariant  polynomial, with integer coefficients,  in the $n^2$ entries of $A$, which are now viewed as indeterminates. 

The polynomial $D_r$  is homogeneous of degree $n(n-1)$ if $r=1$ and $\cha K\ne 2$, and of degree $
{n\choose 2r}{2r\choose r}/2$ otherwise.
In particular, we have $D_r=1$ for $r>\lfloor n/2\rfloor$. The polynomials $D_r$ for $r=1, \dots,\lfloor n/2\rfloor $ are non-constant, and no two of them coincide (not even up to a scalar multiplier). 
\begin{Lemma}\label{Dirred}For each $1\le r\le\lfloor n/2\rfloor$, the polynomial $D_r$ is irreducible in the polynomial ring  $K[a_{ij}:i,j=1, \dots, n]$ for any field $K$, and thus also in  $\Z[a_{ij}:i,j=1, \dots, n]$.
\end{Lemma}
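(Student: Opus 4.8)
The plan is to prove irreducibility of $D_r$ by a specialization-and-degree argument. First I would recall that $D_r$, viewed as a polynomial in the entries $a_{ij}$, is (up to the $\cha K=2$ adjustment for $r=1$) the pullback of the symmetric polynomial $\delta_r(\la_1,\dots,\la_n)$ under the map sending a matrix to its multiset of eigenvalues; concretely, $D_r$ is obtained from $\delta_r$ by expressing the relevant symmetric function in the coefficients of the characteristic polynomial, i.e. in the elementary symmetric functions $e_1,\dots,e_n$ of the $\la_s$, which in turn are (up to sign) polynomials in the $a_{ij}$. The key structural input is that the map $\Spec\colon \mathbb A^{n^2}\to \mathbb A^n$, $A\mapsto (e_1(A),\dots,e_n(A))$, is dominant with geometrically irreducible generic fibres (the fibre over a point with distinct roots is a single conjugacy class, hence irreducible of dimension $n^2-n$), so pullback along it takes irreducible polynomials to irreducible polynomials once one knows the generic fibre is irreducible and the polynomial being pulled back is not a unit. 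Thus it suffices to prove:

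\textbf{(i)} the image of $D_r$ under $\Spec^*$ is a nonzero non-unit (clear for $1\le r\le\lfloor n/2\rfloor$, since $\delta_r$ is a genuine non-constant polynomial in the $e_i$ — this uses that $\delta_r$ is symmetric, so it \emph{is} a polynomial in the $e_i$, and that it is non-constant, as asserted in the text just before the Lemma); and

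\textbf{(ii)} the polynomial $\delta_r \in K[e_1,\dots,e_n]$, or more precisely the corresponding element $d_r$ of $K[e_1,\dots,e_n]$ with $d_r(e_i) = \delta_r(\la)$, is itself irreducible. Step (ii) is the heart of the matter. Here I would pass to the polynomial ring $K[\la_1,\dots,\la_n]$, where $K[e_1,\dots,e_n]$ embeds as the ring of symmetric polynomials, and argue as follows: $\delta_r$ factors in $K[\la]$ into the linear forms $\la_T-\la_S$ over disjoint pairs $S,T\in\binom{[n]}{r}$ with $\min S<\min T$, and these are pairwise non-associate irreducibles of $K[\la]$ (a linear form $\la_{T}-\la_{S}=\la_{T\setminus S}-\la_{S\setminus T}$ is determined up to scalar by the partition of its support into the "$+$" part and the "$-$" part). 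The symmetric group $\Sigma_n$ acts on this set of linear forms, and I claim it acts \emph{transitively}: any two disjoint pairs $(S,T)$, $(S',T')$ of $r$-subsets can be carried to one another by a permutation (match $S\to S'$, $T\to T'$ arbitrarily on the $2r$ elements, extend to the rest). Given an invariant factorization $d_r = f\cdot g$ of the symmetric polynomial, both $f$ and $g$ are products of subsets of these linear forms (by unique factorization in $K[\la]$ and the fact that $d_r$ has no repeated factors — the pairs are distinct), and the subsets must each be $\Sigma_n$-stable up to scalars; but the only $\Sigma_n$-stable subsets are $\emptyset$ and the whole set, by transitivity. Hence $d_r$ is, up to a scalar, irreducible in $K[\la]^{\Sigma_n}=K[e_1,\dots,e_n]$. (For $r=1$ in characteristic $2$ one uses $D_1=\delta$ rather than $\delta^2$; $\delta=\prod_{s<t}(\la_t-\la_s)$ is likewise a product of a transitively-permuted set of distinct linear forms, so the same argument applies — note $\delta$ itself is not symmetric, but the argument for irreducibility of the \emph{square-root} descends correctly because in characteristic $2$ we have $\delta^2\in K[e]$ and actually $\delta$ itself lies in $K[e]$ since $-1=1$ makes $\delta$ symmetric.)

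Finally I would assemble (i) and (ii) with the fibre-irreducibility of $\Spec$ to conclude. The cleanest formulation: $\Spec^*\colon K[e_1,\dots,e_n]\hookrightarrow K[a_{ij}]$ identifies $K[e]$ with the subring of conjugation-invariants modulo the ideal... — more safely, I would argue directly that if $D_r = FG$ in $K[a_{ij}]$ then, restricting to the dense open set where the characteristic polynomial has distinct roots and using that each fibre of $\Spec$ there is irreducible, one of $F,G$ is constant on every such fibre, hence (fibres being dense and irreducible, $F$ and $G$ polynomial) lies in the image of $\Spec^*$, say $F=\Spec^*(f)$; then $d_r\mid f\cdot(\text{something})$ forces, by (ii), that $f$ is a unit or $f$ is a scalar times $d_r$, giving the desired factorization of $D_r$ as trivial. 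The base-change remark ("thus also in $\Z[a_{ij}]$") then follows by the standard Gauss-type argument: $D_r$ is primitive over $\Z$ (as stated, it is not the zero polynomial over any field) and irreducible over the fraction field $\Q$, hence irreducible over $\Z$.

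The main obstacle I anticipate is \textbf{(ii)}, specifically verifying carefully that the linear forms $\la_T - \la_S$ occurring in $\delta_r$ are pairwise non-associate and that $\Sigma_n$ permutes them transitively — the transitivity is easy, but one must be careful that the condition $\min S < \min T$ picks exactly one representative from each $\{(S,T),(T,S)\}$ pair and that no two \emph{distinct} such ordered pairs give proportional linear forms (this fails trivially if one forgot the $\min$ condition). A secondary technical point is making the "pullback of irreducible along a fibration with irreducible generic fibre is irreducible" step rigorous over an arbitrary (not necessarily algebraically closed or perfect) field $K$; I would handle this by base-changing to $\overline K$, noting that irreducibility of $D_r$ over $\overline K$ implies it over $K$, and that the geometric fibre of $\Spec$ over a point with distinct eigenvalues is a single $GL_n(\overline K)$-orbit, hence smooth and connected, hence geometrically irreducible.
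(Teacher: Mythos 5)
Your overall architecture --- reduce to showing that every divisor of $D_r$ in $K[a_{ij}]$ is a symmetric function of the eigenvalues, then finish with the $\Sigma_n$-transitivity argument on the linear forms $\la_T-\la_S$ --- parallels the paper, and your step (ii) (pairwise non-associate forms, transitive action, hence only trivial symmetric divisors) is essentially the paper's terse final sentence made explicit. But the first half has a genuine gap. The general principle you invoke, that pullback along a dominant map with geometrically irreducible generic fibres sends irreducible polynomials to irreducible polynomials, is false: for $\phi\colon\mathbb{A}^2\to\mathbb{A}^1$, $(x,y)\mapsto xy$, the generic fibre is irreducible, yet $\phi^*(t)=xy$ is reducible --- what matters is the fibre over the hypersurface $\{h=0\}$, not the generic fibre. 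Your ``safer'' direct argument avoids this principle but hinges on the unproved assertion that a divisor $F$ of $D_r$ is constant on each fibre with distinct eigenvalues. That assertion is the crux of the whole lemma: on such a fibre (a single conjugacy class, isomorphic to $GL_n/T$ for a maximal torus $T$) the restriction of $F$ is a nowhere-vanishing regular function, and one must prove that such a function is constant --- e.g.\ via the fact that the units of $\mathcal{O}(GL_n(K))$ are $K^\times\cdot\det^{\mathbb{Z}}$ (or Rosenlicht's unit theorem) together with right $T$-invariance and invariance under scalars. The paper supplies exactly this missing ingredient by a different device: since $D_r$ has only finitely many divisors up to scalars, the connected group $GL_n(K)$ can move a divisor only by a multiplicative character, which is trivial because $SL_n(K)$ is perfect and conjugation by scalar matrices is the identity; hence every divisor is conjugation-invariant. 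Without some argument of this kind, your reduction to step (ii) is unsupported.

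A second, smaller gap: for $r=1$ and $\cha K\ne 2$ the polynomial in question is $D_1=\delta^2$, which \emph{does} have repeated linear factors, so your step (ii) as written (``$d_r$ has no repeated factors'') does not cover it; your parenthetical only treats $r=1$ in characteristic $2$. The patch is easy: by unique factorization and transitivity, a symmetric divisor must be $c\prod_{s<t}(\la_t-\la_s)^{m}$ for a single $m\in\{0,1,2\}$, and $m=1$ is excluded because $\delta$ is alternating rather than symmetric when $\cha K\ne 2$. Your non-associateness check, the transitivity argument for $r\ge 2$, and the Gauss-lemma reduction from $\Q$ to $\Z$ are all fine.
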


\begin{proof}It suffices to prove irreducibility over $K$. We may assume that $K$ is algebraically closed. 

Assume that a  polynomial $f\in K[a_{ij}:i,j=1, \dots, n]$  divides $D_r$.  Then the transformed polynomial $(Pf)(A):=f\left(P^{-1}AP\right)$ divides \hbox{$D_r\left(P^{-1}AP\right)=D_r$} for all  $P\in GL_n(K)$. But $D_r$ has only finitely many divisors, up to scalar multipliers. Since $GL_n(K)$ is a connected algebraic group, we must have $Pf=\chi(P)f$ for  some multiplicative character $\chi:GL_n(K)\to K^\times$. Since $SL_n(K)$ is a perfect group, we have $\chi(P)=1$ and thus $Pf=f$ for all $P\in SL_n(K)$. Conjugation by scalar matrices is the identity, so in fact $Pf=f$ for all $P\in GL_n(K)$, i.e., $f$ is a symmetric polynomial in the eigenvalues, and so is $D_r/f$. From the definition of $D_r$, we see that $f$ must be 1 or $D_r$, up to a scalar multiplier.
\end{proof}
We define $$G_k(A)=\prod_{2\le r\le k} D_r(A)^{\binom nk{n-2r \choose k-r}}=\gamma_k(\la_1, \dots, \la _n).$$
\subsection*{The $k$-th triangulant}
Let $A\in M_n(K)$. We let $A$ act on the $n\choose k$-dimen\-sion\-al vector space $\bigwedge^kK^n$ of exterior tensors via the Leibniz rule \begin{equation}\label{A_k}A_k(v_1\wedge\dots\wedge  v_k)
=\sum_{i=1}^k v_1\wedge\dots\wedge Av_i\wedge\dots\wedge  v_k.\end{equation}
The linear transformation $A_k$ is given by an $\binom nk$-square matrix whose entries are linear forms, with integer coefficients, in the entries of $A$. 

Observe that $A_k^\top=\left(A^\top\right)_k$.

 If $B=\diag(b_1, \dots, b_n)$, then $$B_k=\diag\left(b_S:S\in{[n]\choose k}\right),$$ and, from Proposition~\ref{diag}, we get
 \begin{align*}T(A_k,B_k)=(-1)^{\left\lfloor\binom nk/2\right\rfloor}\Delta(A_k)\delta\left(b_S: S\in{[n]\choose k}\right)^{n\choose k}.\end{align*}
 Here $\Delta(A_k)$ is a homogeneous polynomial of degree ${n\choose k}\binom{\binom nk}2$, with integer coefficients,  in the entries of $A_k$, and therefore also in the entries of  $A$. 

Using formula~\eqref{kdelta}, we obtain \begin{equation}\label{kdiag}
T(A_k,B_k)=(-1)^{\left\lfloor\binom nk/2\right\rfloor}\Delta(A_k)\delta(b_1, \dots, b_n)^{{n\choose k}{n-2 \choose k-1}}G_k(B)
\end{equation}  for $B=\diag(b_1, \dots, b_n)$. 

We aim to show that the polynomial $T(A_k,B_k)$ is divisible by the polynomial $G_k(B)$ for general (non-diagonal) $B$. This will follow from Lemma~\ref{2dim} and the following general statement.
\begin{Lemma}\label{general}
Let $R$ be a unique factorization domain, $p\in R$ an irreducible element, and $M$ a square matrix with entries in $R$.
Let $F_p$ be  the field of fractions of the integral domain $R/(p)$, and let $\bar M$ be the matrix $M$ when viewed as a matrix over $F_p$. Denote $d={\dim\ker\bar M}$. Then $p^d$ divides $\det M$.

\end{Lemma}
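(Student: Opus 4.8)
The plan is to localize $R$ at the prime ideal $(p)$ and reduce the claim to the elementary fact that a square matrix over $R_{(p)}$ having $d$ of its columns divisible by $p$ has determinant divisible by $p^{d}$. Since $R$ is a unique factorization domain and $p$ is irreducible, $p$ is prime, so $(p)$ is a prime ideal and the localization $R_{(p)}$ is a local ring with maximal ideal $pR_{(p)}$ and residue field $R_{(p)}/pR_{(p)}=\operatorname{Frac}(R/(p))=F_{p}$ (localization commutes with passing to the quotient, and the image of $R\setminus(p)$ in $R/(p)$ is exactly the set of nonzero elements). I will write $v_{p}$ for the $p$-adic valuation on $\operatorname{Frac}(R)$, where $v_{p}(x)$ is the exponent of $p$ in the prime factorization of $x\in R\setminus\{0\}$; then for $x\in R$ one has $v_{p}(x)\ge m\iff p^{m}\mid x$ in $R$, while for $x\in\operatorname{Frac}(R)^{\times}$ one has $v_{p}(x)\ge 0\iff x\in R_{(p)}$ and $v_{p}(x)=0\iff x\in R_{(p)}^{\times}$.

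If $\det M=0$ there is nothing to prove, so assume $\det M\ne 0$. First I would pick a basis $\bar w_{1},\dots,\bar w_{d}$ of $\ker\bar M\subseteq F_{p}^{\,n}$, choose arbitrary lifts $w_{1},\dots,w_{d}\in R_{(p)}^{\,n}$, extend $\bar w_{1},\dots,\bar w_{d}$ to a basis of $F_{p}^{\,n}$ by classes $\bar v_{d+1},\dots,\bar v_{n}$, lift these arbitrarily to $v_{d+1},\dots,v_{n}\in R_{(p)}^{\,n}$, and form the matrix $V\in M_{n}(R_{(p)})$ whose columns are $w_{1},\dots,w_{d},v_{d+1},\dots,v_{n}$. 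Its reduction $\bar V$ lies in $GL_{n}(F_{p})$, so $\overline{\det V}=\det\bar V\ne 0$ in $F_{p}$, hence $\det V\notin pR_{(p)}$; since $R_{(p)}$ is local this gives $\det V\in R_{(p)}^{\times}$, so $V\in GL_{n}(R_{(p)})$ and $v_{p}(\det V)=0$.

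Next, for $1\le i\le d$ we have $\overline{Mw_{i}}=\bar M\bar w_{i}=0$, so every entry of $Mw_{i}$ lies in $pR_{(p)}$ and we may write $Mw_{i}=pw_{i}'$ with $w_{i}'\in R_{(p)}^{\,n}$. The columns of $MV$ are then $pw_{1}',\dots,pw_{d}',Mv_{d+1},\dots,Mv_{n}$, so by multilinearity of the determinant in these columns $\det(MV)=p^{d}c$ for some $c\in R_{(p)}$, and therefore $v_{p}(\det(MV))\ge d$. Since $v_{p}(\det(MV))=v_{p}(\det M)+v_{p}(\det V)=v_{p}(\det M)$, we conclude $v_{p}(\det M)\ge d$, and as $\det M\in R$ this is precisely $p^{d}\mid\det M$ in $R$.

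The argument has no real obstacle; the only point deserving care is that $R$ is not assumed Noetherian (in the intended application it will anyway be a polynomial ring over $\Z$ or over a field), so I would avoid any structure theory and use only elementary properties of the localization of a unique factorization domain at a prime element: that $R_{(p)}$ is local with maximal ideal $pR_{(p)}$ and residue field $F_{p}$, and that a matrix over $R_{(p)}$ whose reduction modulo $p$ is invertible over $F_{p}$ is itself invertible. Equivalently, one could first note that $R_{(p)}$ is a discrete valuation ring — every nonzero element of $R$ is $p^{k}$ times a unit of $R_{(p)}$, by unique factorization — and then run a Smith normal form argument over $R_{(p)}$, comparing $v_{p}(\det M)=\sum e_{i}$ with $d=\#\{i:e_{i}\ge 1\}$ for the diagonal entries $p^{e_{i}}$; but lifting a kernel basis is shorter and needs less.
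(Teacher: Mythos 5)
Your argument is correct, and the points you single out for care are exactly the ones that need it (the residue field of $R_{(p)}$ being $F_p$, invertibility of $V$ over the local ring because its reduction is invertible, and the final passage from $v_p(\det M)\ge d$ to $p^d\mid\det M$ in $R$). The paper reaches the same conclusion by a more pedestrian route that never leaves $R$: since $\dim\ker\bar M=d$, it picks $d$ columns of $\bar M$ each lying in the $F_p$-span of the remaining $n-d$ columns; lifting the coefficients and clearing denominators, each corresponding column of $M$, after multiplication by an element of $R$ not divisible by $p$, becomes congruent mod $p$ to an $R$-linear combination of the non-chosen columns, so $\det M$ multiplied by an element prime to $p$ is the determinant of a matrix with $d$ columns divisible by $p$, and unique factorization finishes. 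In substance both proofs right-multiply $M$ by a matrix whose determinant has $p$-valuation zero so as to force $d$ columns into $pR_{(p)}$ (respectively $pR$ after scaling); you build that matrix from a lifted kernel basis over the localization, the paper builds it implicitly from column-dependence relations with denominators cleared by hand. Your version buys cleaner bookkeeping, since denominators prime to $p$ become units and the concluding divisibility step is explicit rather than implicit, at the cost of invoking standard facts about $R_{(p)}$; the paper's version is shorter and entirely elementary.
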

\begin{proof}
We can choose $d$ columns of $\bar M$ such that each chosen column is a  linear combination, over $F_p$,  of the non-chosen columns. Thus, we can choose $d$ columns of $M$ such that each chosen column, if multiplied by a suitable element of $R$ that is not divisible by $p$, becomes congruent mod $p$ to a linear combination, over $R$,  of the non-chosen columns. Thus, $\det M$ can be multiplied by an element of $R$, not divisible by $p$, to get the determinant of a matrix containing $d$ columns all of whose entries are divisible by $p$.
\end{proof}
\begin{Pro} The $2n^2$-variate polynomial $T(A_k,B_k)$ is divisible by the product $G_k(A)G_k(B)$ in the ring $R= \Z[a_{ij}, b_{ij}: i,j=1, \dots, n]$, where \hbox{$A=(a_{ij})_{11}^{nn}$} and $B=(b_{ij})_{11}^{nn}$.
\end{Pro}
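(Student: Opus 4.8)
The plan is to reduce everything to a single divisibility statement handled by Lemma~\ref{general}, and then to reassemble the factors of $G_k(A)G_k(B)$ using coprimality and the transpose symmetry. First I would record that $G_k(B)=\prod_{2\le r\le k}D_r(B)^{e_r}$ with $e_r=\binom nk\binom{n-2r}{k-r}$, that the factors with $r>\lfloor n/2\rfloor$ are constant, and that the remaining $D_r(B)$ for $2\le r\le\lfloor n/2\rfloor$ are pairwise non-associate irreducibles in $\Z[b_{ij}]$ by Lemma~\ref{Dirred}, hence pairwise non-associate primes in $R=\Z[b_{ij}][a_{ij}]$; the same holds for the $D_r(A)$, and since the two families use disjoint variables the whole collection of primes $\{D_r(A)\}\cup\{D_r(B)\}$ is pairwise non-associate. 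Consequently it is enough to show, for each fixed $r$ with $2\le r\le\min(k,\lfloor n/2\rfloor)$, that $D_r(B)^{e_r}$ divides $T(A_k,B_k)=\det M(A_k,B_k)$ together with the mirror statement $D_r(A)^{e_r}\mid T(A_k,B_k)$; their product over all such $r$ is then exactly $G_k(A)G_k(B)$. The mirror statement comes for free from the $B$-statement via $A_k^\top=(A^\top)_k$ and $T(X^\top,Y^\top)=T(Y,X)$: these give $T(A_k,B_k)=T\bigl((B^\top)_k,(A^\top)_k\bigr)$, so applying the $B$-statement to the pair $(B^\top,A^\top)$ produces $D_r(A^\top)^{e_r}=D_r(A)^{e_r}$ as a divisor of $T(A_k,B_k)$.

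To prove $D_r(B)^{e_r}\mid\det M(A_k,B_k)$ I would invoke Lemma~\ref{general} with $R=\Z[a_{ij},b_{ij}]$, $p=D_r(B)$, and $M=M(A_k,B_k)\in M_{(\binom nk)^2}(R)$. Writing $F$ for an algebraic closure of $\mathrm{Frac}\bigl(R/(D_r(B))\bigr)$ and $\bar A,\bar B\in M_n(F)$ for the images of the two generic matrices, this reduces the task to the bound $\dim_F\ker M(\bar A_k,\bar B_k)\ge e_r$ (kernel dimension being insensitive to field extension). Here $\bar A$ has algebraically independent entries while $\bar B$ satisfies only $D_r(\bar B)=0$. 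Since $D_1$ is an irreducible polynomial distinct, up to a scalar, from $D_r$, its image in $R/(D_r(B))$ is nonzero, so $\bar B$ has $n$ distinct eigenvalues $\bar\la_1,\dots,\bar\la_n\in F$; thus $\bar B$, and hence $\bar B_k$, is diagonalizable over $F$, with $\bar B_k$ having eigenvalues $\bar\la_U$, $U\in\binom{[n]}{k}$. On the other hand $0=D_r(\bar B)=\delta_r(\bar\la_1,\dots,\bar\la_n)=\prod(\bar\la_T-\bar\la_S)$ forces $\bar\la_{S_0}=\bar\la_{T_0}$ for some disjoint $S_0,T_0\in\binom{[n]}{r}$. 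For each of the $\binom{n-2r}{k-r}$ sets $W\in\binom{[n]\setminus(S_0\cup T_0)}{k-r}$ this yields $\bar\la_{W\cup S_0}=\bar\la_W+\bar\la_{S_0}=\bar\la_W+\bar\la_{T_0}=\bar\la_{W\cup T_0}$, and the $2\binom{n-2r}{k-r}$ subsets $W\cup S_0,\,W\cup T_0$ are pairwise distinct, so the eigenvalues of $\bar B_k$ satisfy $\sum(m-1)\ge\binom{n-2r}{k-r}$, where $m$ runs over geometric multiplicities of distinct eigenvalues. Feeding this into Lemma~\ref{2dim}, applied to $\bar A_k,\bar B_k\in M_{\binom nk}(F)$, gives $\dim_F\ker M(\bar A_k,\bar B_k)\ge\binom nk\sum(m-1)\ge\binom nk\binom{n-2r}{k-r}=e_r$, as needed; any further eigenvalue coincidences on $\{D_r(B)=0\}$ only enlarge the kernel, so Lemma~\ref{general} would in fact yield a higher power of $D_r(B)$, but $e_r$ is all we need.

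The main obstacle is the kernel-dimension estimate: one has to pin down exactly what the generic matrix $\bar B$ on the hypersurface $\{D_r(B)=0\}$ looks like spectrally, and check that Lemma~\ref{2dim} converts this into precisely the exponent $e_r$ occurring in $G_k$. The crucial input is Lemma~\ref{Dirred}, which does double duty: its irreducibility assertion is what lets us speak of ``the generic point'' of $\{D_r(B)=0\}$ (so that there $D_1\ne0$ forces $\bar B$ to be regular semisimple while exactly one factor $\bar\la_{T_0}-\bar\la_{S_0}$ of $\delta_r$ vanishes), and it is also what makes the $D_r(A)$ and $D_r(B)$ pairwise non-associate primes, which is what allows the individual divisibilities to be multiplied up to divisibility by $G_k(A)G_k(B)$. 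The transpose symmetry, the application of Lemma~\ref{general}, and the counting of $k$-subsets are then routine bookkeeping.
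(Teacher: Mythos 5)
Your proof is correct and follows essentially the same route as the paper: the transpose-symmetry reduction to the $G_k(B)$ factor, the count of $\binom{n-2r}{k-r}$ disjoint pairs of coincident eigenvalues of $B_k$ on $\{D_r(B)=0\}$ fed into Lemma~\ref{2dim}, Lemma~\ref{general} to extract $D_r(B)^{\binom nk\binom{n-2r}{k-r}}$, and Lemma~\ref{Dirred} to multiply the resulting prime-power divisibilities into $G_k(A)G_k(B)$. The only (harmless) variation is that you verify the kernel bound directly over $\mathrm{Frac}\bigl(R/(D_r(B))\bigr)$ at the generic point, whereas the paper checks it at complex matrices with $D(B)\ne 0=D_r(B)$ and transfers the divisibility of the minors by $D_r(B)$ back to $\Z$ via primitivity before invoking Lemma~\ref{general}.
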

\begin{proof} 
The eigenvalues of $A$ and $A^\top$ coincide. Thus, for all $r\ge 1$, we have $D_r(A)=D_r\left(A^\top\right)$. Therefore, \hbox{$G_k(A)=G_k\left(A^\top\right)$}. Furthermore,  $$T(A_k, B_k)=T\left(B_k^\top, A_k^\top\right)=T\left(\left(B^\top\right)_k, \left(A^\top\right)_k\right).$$ Thus, it suffices to prove the divisibility by $G_k(B)$ and then it will follow for $G_k(A)$.

 Fix $r\ge 2$, and let $B\in M_n\left(\C\right)$ have  $D(B)\ne 0=D_r(B)$.   Then $B$ has $n$ distinct eigenvalues and therefore is diagonalizable. Thus, $B_k$ is also diagonalizable. Since $D_r(B)=0$, it follows that $B_k$ has ${n-2r \choose k-r}$ disjoint pairs of equal eigenvalues, whence, by Lemma~\ref{2dim}, we  have $$\dim\ker M(A_k, B_k)\ge\binom nk \binom{n-2r}{k-r}=:d,$$ i.e., all minors of $M(A_k, B_k)$ of dimension $>\binom nk^2 -d$ are zero.

 Thus, any such minor, when viewed as a polynomial in the entries of $A$ and $B$, is divisible by $D_r(B)$ over $ \C$, but then also over $\Z$ because all these polynomials have integer coefficients and $D_r(B)$ is primitive.  We may now apply Lemma~\ref{general} with 
  $p=D_r(B)$ and $M=M(A_k, B_k)$ to deduce that $D_r(B)^d$ divides $\det M=T(A_k, B_k)$.

  This is true for all $r\ge 2$. By Lemma~\ref{Dirred}, the polynomials $D_r(B)$, where $r\le\lfloor n/2\rfloor$, are (distinct) irreducibles. The Proposition follows.
\end{proof}

\begin{Def}Let $0\le k\le n$.  The \emph{$k$-th triangulant} of two $n$-square matrices $A$ and $B$ is $$T_k(A,B)=\frac{T(A_k, B_k)}{G_k(A)G_k(B)}.$$
\end{Def}
This is a bihomogeneous polynomial with integer coefficients. It has degree ${n\choose 2}{n\choose k}{n-2\choose k-1}$ in each of both sets of $n^2$ variables. 

Observe that \hbox{$T_1(A,B)=T(A,B)$.}

 It is not difficult to see that $T_{n-k}(A,B)= T_k(B,A)$. In particular, we have \hbox{$T_0=T_n=1$.}

As we have done for $k=1$, it is important to obtain product formulas  for $T_k(A, B)$ when $A$ and/or $B$ are diagonalizable.

When $B=\diag(b_1, \dots, b_n)$, we have, from formula~\eqref{kdiag},
\begin{equation}\label{Tkdiag}G_k(A)T_k(A,B)=(-1)^{\left\lfloor{\binom nk}/2\right\rfloor}\Delta(A_k)\delta(b_1, \dots, b_n)^{{n\choose k}{n-2 \choose k-1}}.\end{equation}

 If $A=P^{-1}\Lambda P$ with $\Lambda =\diag(a_1, \dots, a_n)$, then, from Lemma~\ref{Delta} and formula~\eqref{kdelta}, we obtain
\begin{equation}\label{kDelta}\Delta(A_k)=\delta(a_1, \dots, a_n)^{{n\choose k}{n-2 \choose k-1}}G_k(A)\frac{\prod_{S\in \binom{[n]}k}\prod_{T\in \binom{[n]}k}\det P[S|T]}{\det P^{\binom nk\binom{n-1}{k-1}}}.\end{equation}

From formulas~\eqref{Tkdiag} and \eqref{kDelta}, or, alternatively, from Corollary~\ref{diagdiag} together with formula~\eqref{kdelta}, 
 we obtain the following formula for the $k$-th triangulant of two (non-simul\-ta\-ne\-ously) diagonalizable matrices:

 \begin{Pro}\label{kdiagdiag}If $\Lambda =\diag(a_1, \dots, a_n)$, $P\in GL_n(K)$,  $A=P^{-1}\Lambda P$, and \hbox{$B=\diag(b_1, \dots, b_n)$}, then the $k$-th triangulant $ T_k(A,B)$ is given by the product $$
(-1)^{\left\lfloor{\binom nk}/2\right\rfloor}(\delta(a_1, \dots, a_n)\delta(b_1, \dots, b_n))^{{n\choose k}{n-2 \choose k-1}}\frac{\prod_{S\in \binom{[n]}k}\prod_{T\in \binom{[n]}k}\det P[S|T]}{\det P^{\binom nk\binom{n-1}{k-1}}}.
 $$
\end{Pro}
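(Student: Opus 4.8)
The statement is an explicit product formula for $T_k(A,B)$ when $A$ is diagonalizable and $B$ is diagonal; the paper already supplies two routes, so I will organize the plan around the cleaner of the two, the one starting from Corollary~\ref{diagdiag}. First I would observe that $B_k=\diag\left(b_S:S\in\binom{[n]}k\right)$ and that $A_k=P_k^{-1}\Lambda_k P_k$ is diagonalizable, where $\Lambda_k=\diag\left(a_S:S\in\binom{[n]}k\right)$ and $P_k=\bigwedge^kP$ is the induced action of $P$ on $\bigwedge^kK^n$, whose matrix entries in the standard basis of decomposable $k$-vectors are exactly the $k\times k$ minors $\det P[S|T]$ (with $S,T\in\binom{[n]}k$). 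Note $\det P_k=(\det P)^{\binom{n-1}{k-1}}$ by the classical formula for the determinant of a compound matrix. This reduces the problem to applying the $k=1$ product formula, Corollary~\ref{diagdiag}, to the pair $(A_k,B_k)$ of $\binom nk$-square matrices.

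\textbf{Key steps.} Applying Corollary~\ref{diagdiag} to $(A_k,B_k)$ in dimension $N=\binom nk$, with diagonalizing matrix $P_k$, yields
\begin{equation*}
T(A_k,B_k)=(-1)^{\lfloor N/2\rfloor}\,\delta\!\left(a_S:S\in{\textstyle\binom{[n]}k}\right)^{N}\delta\!\left(b_S:S\in{\textstyle\binom{[n]}k}\right)^{N}\frac{1}{\det(P_k)^{N}}\prod_{S}\prod_{T}\det P[S|T].
\end{equation*}
Now I substitute the three ingredients. For the two Vandermonde powers I invoke formula~\eqref{kdelta}:
\begin{equation*}
\delta\!\left(a_S:S\in{\textstyle\binom{[n]}k}\right)^{\binom nk}=\delta(a_1,\dots,a_n)^{\binom nk\binom{n-2}{k-1}}\gamma_k(a_1,\dots,a_n)=\delta(a_1,\dots,a_n)^{\binom nk\binom{n-2}{k-1}}G_k(A),
\end{equation*}
and likewise for the $b$'s with $G_k(B)$. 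For the determinant of the compound matrix I use $\det(P_k)^{N}=(\det P)^{\binom nk\binom{n-1}{k-1}}$. Finally $\lfloor N/2\rfloor=\lfloor\binom nk/2\rfloor$ is exactly the exponent of $-1$ in the claim. Assembling these, $T(A_k,B_k)$ equals $G_k(A)G_k(B)$ times the asserted product, and dividing by $G_k(A)G_k(B)$ (legitimate since, as established just before the Definition of $T_k$, this product divides $T(A_k,B_k)$ in the relevant polynomial ring, hence the quotient formula holds as a polynomial identity and a fortiori after the specialization) gives precisely the formula in Proposition~\ref{kdiagdiag}.

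\textbf{Main obstacle.} The genuinely delicate point is bookkeeping of exponents, and in particular verifying that $\prod_S\prod_T\det P[S|T]$ and $\det(P_k)^{\binom nk}$ carry the exponents claimed; the identity $\det(\bigwedge^kP)=(\det P)^{\binom{n-1}{k-1}}$ must be cited or proved (it follows, e.g., from the multiplicativity of $P\mapsto\det\bigwedge^kP$ and evaluation on diagonal $P$, or from the fact that each coordinate $1,\dots,n$ appears in $\binom{n-1}{k-1}$ of the sets $S\in\binom{[n]}k$). One must also make sure the two displayed derivations in the paper — the one via \eqref{Tkdiag} and \eqref{kDelta}, and the one via Corollary~\ref{diagdiag} and \eqref{kdelta} — actually agree; this is a consistency check on \eqref{kDelta}, which itself is just Lemma~\ref{Delta} applied to $A_k$ combined with $\Delta_t$-type factorizations, so no new idea is needed, only care. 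Since everything is a polynomial identity over $\Z[a_{ij},b_{ij}]$ and both sides are already known to be polynomials, it suffices to check it on the Zariski-dense locus where $A$ has distinct eigenvalues, so there is no subtlety about non-diagonalizable degenerations.
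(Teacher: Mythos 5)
Your argument is correct and is essentially the paper's own proof: it is the second route the paper indicates, namely Corollary~\ref{diagdiag} applied to the pair $(A_k,B_k)$, with $A_k$ diagonalized by the compound matrix $P_k=\bigwedge^k P$ (entries $\det P[S|T]$, $\det P_k=(\det P)^{\binom{n-1}{k-1}}$), combined with formula~\eqref{kdelta} to convert $\delta\bigl(a_S\bigr)^{\binom nk}$ and $\delta\bigl(b_S\bigr)^{\binom nk}$ and then cancelling $G_k(A)G_k(B)$. One small precision: the dense locus on which to verify the identity before extending by polynomiality in the $a_i,b_i$ (for fixed $P$) should be where $G_k(A)G_k(B)\neq 0$, i.e.\ where the relevant eigenvalue sums over disjoint index sets are distinct, which is not implied by $A$ merely having distinct eigenvalues.
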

This might suggest the false impression that $T_k(A,B)$, as  a polynomial, is divisible by $D_1(A)D_1(B)$. We shall now refute this. Note that if  $B$ has characteristic polynomial $$\det (xI-B)=\prod_{s=1}^n(x-b_s),$$
 then the characteristic polynomial of $B_k$ is  $$\det (xI-B_k)=\prod_{S\in{[n]\choose k}}(x-b_S).$$

\begin{Pro}Consider the $2n^2$-variate polynomial $$T_k(A,B)\in K[a_{ij}, b_{ij}: i,j=1, \dots, n],$$ where $A=(a_{ij})_{11}^{nn}$ and $B=(b_{ij})_{11}^{nn}$.
 For $1\le r\le \lfloor n/2\rfloor$, the discriminants $D_r(A)$ and $D_r(B)$ do not divide $T_k(A,B)$.

\end{Pro}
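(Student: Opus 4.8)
The plan is to exhibit, for each of $D_r(A)$ and $D_r(B)$ with $1\le r\le\lfloor n/2\rfloor$, an explicit specialization of the $2n^2$ variables at which the relevant discriminant vanishes but $T_k(A,B)$ does not. Since $D_r(A)$ is irreducible (Lemma~\ref{Dirred}) and $T_k$ has integer coefficients, it suffices to produce one point over $\C$, or even one point over $\overline{\Q}$, at which $D_r(A)=0$ but $T_k(A,B)\ne 0$; then $D_r(A)$ cannot divide $T_k(A,B)$. By the symmetry $T_k(A,B)=T_{n-k}(B,A)$ and $G_k=G_{n-k}$, and the fact that $T(A_k,B_k)=T((B^\top)_k,(A^\top)_k)$ used in the divisibility Proposition, it is enough to handle $D_r(B)$; the statement for $D_r(A)$ follows by transposing and swapping roles.

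First I would choose $B$ diagonalizable with exactly the right coincidence pattern: take $B=\diag(b_1,\dots,b_n)$ where the $b_s$ are chosen so that $D_r(B)=0$ but $D(B)\ne 0$ — concretely, pick the $b_s$ distinct but with $b_S=b_T$ for exactly one disjoint pair $S,T\in\binom{[n]}{r}$ (for instance $b_1+b_2=b_3+b_4$ with all $b_s$ otherwise generic, in the case $r=2$; for general $r$ one sets $b_{[r]}=b_{\{r+1,\dots,2r\}}$ and leaves the rest generic). With this $B$, formula~\eqref{Tkdiag} gives $G_k(A)T_k(A,B)=(-1)^{\lfloor\binom nk/2\rfloor}\Delta(A_k)\,\delta(b_1,\dots,b_n)^{\binom nk\binom{n-2}{k-1}}$, and $\delta(b_1,\dots,b_n)\ne 0$ since the $b_s$ are distinct. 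So it remains to choose $A$ — diagonalizable, say $A=P^{-1}\Lambda P$ — so that neither $G_k(A)$ nor $\Delta(A_k)$ vanishes. For that I would use formula~\eqref{kDelta}: $\Delta(A_k)$ is a nonzero scalar multiple of $G_k(A)\,\delta(a_1,\dots,a_n)^{\binom nk\binom{n-2}{k-1}}\prod_{S,T}\det P[S|T]/\det P^{\binom nk\binom{n-1}{k-1}}$, so both $G_k(A)\ne0$ and $\Delta(A_k)\ne0$ are guaranteed as soon as the $a_s$ are distinct (so $G_k(A)\ne0$, being a product of nonzero $\delta_r(a)$'s, and $\delta(a)\ne0$) and $P$ is chosen with all minors $\det P[S|T]$ nonzero — e.g. $P$ a generic matrix, or a Vandermonde/Cauchy matrix, which are totally nonsingular. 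Hence $T_k(A,B)\ne 0$ at this point while $D_r(B)=\delta_r(b)=0$, and $D_r(B)\nmid T_k(A,B)$.

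The case $r=1$ and $D_1(B)$ needs a separate specialization because when $D_1(B)=0$ the matrix $B$ has a repeated eigenvalue, so the diagonalizable analysis above does not apply directly (and indeed Proposition~\ref{kdiagdiag} applies only to distinct eigenvalues). Here I would instead take $B$ with exactly two equal eigenvalues $b_1=b_2$, all others distinct, but with the $2$-dimensional eigenspace of $B$ chosen generically so that it does not lie in any codimension-one $A$-invariant subspace — which is possible when $A$ itself is chosen generic, since a generic $A$ has only $n$ one-dimensional invariant cosubspaces and a generic $2$-plane avoids all of their kernels. Then property $(\subset)$ fails for the block $A_k,B_k$ at level... — more carefully, I would verify directly that $\dim\ker M(A_k,B_k)$ attains exactly the value $\binom nk\binom{n-2}{k-1}$ forced by Lemma~\ref{2dim} (and no more), and that after dividing by $G_k(A)G_k(B)$ — whose vanishing orders in $D_1(B)$ I compute to be zero since $G_k$ involves only $D_r$ with $r\ge2$ — the quotient $T_k(A,B)$ is nonzero. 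Equivalently, one can perturb: let $B(\varepsilon)$ be the generic diagonalizable family degenerating to this $B$, use the already-established product formula to see $T_k(A,B(\varepsilon))\sim c\cdot\varepsilon^{m}$ where $m$ is precisely the multiplicity forced by Lemma~\ref{2dim}, and observe that $D_1(B(\varepsilon))\sim c'\varepsilon^{2}$, so the power of $D_1$ dividing $T(A_k,B_k)$ is exactly $m/2$ times... — and then check that $G_k(B)$ already absorbs the full $D_1$-power coming from the $D_r$, $r\ge2$, via~\eqref{kdelta}, leaving $T_k$ with no $D_1(B)$ factor.

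The main obstacle is the $r=1$, repeated-eigenvalue case: there the clean product formula of Proposition~\ref{kdiagdiag} breaks down, so I expect to spend the real work in an order-of-vanishing computation — showing that the exponent of $D_1(B)$ in $T(A_k,B_k)$ equals exactly $\binom nk\binom{n-2}{k-1}$ (the lower bound from Lemma~\ref{2dim} combined with the count of $D_1$-factors contributed through $G_k(B)$ in the diagonalizable limit~\eqref{kdelta}), and no more. Showing it is no more is the delicate part: one needs that for a suitably generic $A$ and a $B$ with a single pair of coincident eigenvalues, $\dim\ker M(A_k,B_k)$ does not exceed the bound of Lemma~\ref{2dim} — i.e.\ that property $(\subset)$ and higher-order analogues genuinely fail — which I would establish by the genericity-of-invariant-subspaces argument sketched above, choosing $A$ with distinct eigenvalues and a $2$-plane in the repeated eigenspace of $B$ in general position relative to all eigen-cosubspaces of $A_k$. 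For $r\ge2$ no such subtlety arises and the diagonalizable specialization suffices outright.
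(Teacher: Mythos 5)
Your reduction to $D_r(B)$ and your treatment of $r\ge 2$ are fine and essentially coincide with the paper's (that case is exactly Proposition~\ref{kdiagdiag} evaluated at distinct $b_i$ subject to one relation $b_S=b_T$, with $P$ totally nonsingular). The genuine gap is the case $r=1$. The witness you propose --- a diagonalizable $B$ with $b_1=b_2$ and a ``generically placed'' $2$-dimensional eigenspace --- cannot work, because for such a $B$ one has $T_k(A,B)=0$ for \emph{every} $A$: if $v_1,v_2$ span the repeated eigenspace, then for any $S$ with $1\in S$, $2\notin S$ the decomposable eigenvectors of $B_k$ built from $S$ and from $(S\setminus\{1\})\cup\{2\}$ are independent and share the same eigenvalue, so $B_k$ has an eigenvalue of geometric multiplicity at least $2$; by Lemma~\ref{2dim}, $M(A_k,B_k)$ is then singular for all $A$, i.e.\ $T(A_k,B_k)\equiv 0$, and since $G_k(B)\ne 0$ for a generic such $B$ and $G_k(A)\ne 0$ on a dense set of $A$, also $T_k(\cdot,B)\equiv 0$. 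Your genericity argument conflates ``the $2$-plane is not contained in a codimension-one invariant subspace'' with ``the $2$-plane meets it trivially'': a $2$-plane meets every hyperplane nontrivially, so property $(\bot)$ for $(A_k,B_k)$ holds no matter how the eigenspace is placed. The fallback you sketch --- an order-of-vanishing computation along a diagonal family $B(\varepsilon)$ --- is also a dead end: by Proposition~\ref{kdiagdiag} the restriction of $T_k(A,\cdot)$ to diagonal matrices is divisible by $\delta(b_1,\dots,b_n)^{\binom nk\binom{n-2}{k-1}}$, so along any diagonalizable degeneration $T_k$ vanishes to order at least that of $D_1(B(\varepsilon))$, and no diagonalizable specialization or curve can ever certify that $D_1(B)$ does not divide $T_k(A,B)$.

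What is needed --- and what the paper does --- is to test the hypersurface $D_1(B)=0$ at a \emph{non-diagonalizable} point: take $B$ with a single $2\times 2$ Jordan block and otherwise distinct eigenvalues, chosen so that $G_k(B)\ne 0$. Then every eigenvalue of $B_k$ has geometric multiplicity $1$ and every eigenvector of $B_k$ is a standard basis vector of $\bigwedge^k K^n$, hence an eigenvector of $B'_k$ for a generic diagonal $B'$. Since $T(A_k,B'_k)$, as a polynomial in $A$, is not identically zero (again by Proposition~\ref{kdiagdiag}), Theorem~\ref{1}(c), via $(\top)\Leftrightarrow(\bot)$, yields an $A$ with $T(A_k,B_k)\ne 0$, hence $T_k(A,B)\ne 0$ while $D_1(B)=0$. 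Without some such step outside the diagonalizable locus, your proof of the $r=1$ case does not go through.
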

\begin{proof}
For all $r\ge 1$, we have $$D_r(A)=D_r\left(A^\top\right)\quad \textrm{ and }\quad T_k(A, B)=T_k\left(B^\top, A^\top\right).$$ Thus, it suffices to prove the Proposition  for $D_r(B)$ and then it will follow for $D_r(A)$.

 We may assume that $K$ is algebraically closed.

Case $r=1$:  Let $B$ have $n-1$ Jordan blocks of dimensions 2, 1, \dots, 1,  with \hbox{$n-1$}  distinct eigenvalues such that $G_k(B)\ne 0$. Then $D_1(B)=0$ because $B$ has a double eigenvalue. On the other hand, all eigenvalues of $B_k$  have geometric multiplicity 1, and all eigenvectors of $B_k$ belong to the standard basis of $\bigwedge^kK^n$.

Choose a  diagonal matrix $B'$  such that $D(B')G_k(B')\ne 0$. From Proposition~\ref{kdiagdiag}, we see that $T_k(-, B')$ is not the zero polynomial. Thus, $$T(-_k, B'_k)=G_k(-)G_k(B')T_k(-, B')$$ is also not the zero polynomial. But every eigenvector of $B_k$ is an eigenvector of $B'_k$, so, using Theorem~\ref{1}(c), equivalence  $(\top)\Leftrightarrow(\bot)$, we infer that $T(-_k, B_k)$ is not the zero polynomial. Thus, $T_k(-, B)$ is not the zero polynomial.


Case $r\ge 2$:     Immediate from Proposition~\ref{kdiagdiag}. 
\end{proof}
\subsection*{Invariant subspaces}
So far, we have studied the action $A_k$, defined by formula~\eqref{A_k}, of  \hbox{$A\in M_n(K)$} on the $k$-th exterior power $\bigwedge^k K^n$. In fact, $A_k$ is only one of $k$ natural actions. 
For $A\in M_n(K)$, write $$(I+xA)^{\otimes k}=I+\sum_{i=1}^k x^iA_k^{(i)}.$$ The linear transformations $A_k^{(i)}$ act on $\bigotimes^k K^n$. They leave the kernel of the natural quotient map $\bigotimes^kK^n\to\bigwedge^kK^n$ invariant, so they act on $\bigwedge^kK^n$. 
For example, the action of $A_k^{(1)}$ on $\bigwedge^k K^n$ is the linear transformation $A_k$.

The linear transformations $A_k^{(i)}$ can be used  for a characterization of  $k$-dimensional invariant subspaces of $A$, to which we now turn. 

 Recall the following bit of notation: for  a field extension $L|K$ and a $K$-vector space $V$, we write \hbox{$V_L=L\otimes_K V$} for the  $L$-vector space obtained by extension of scalars.
 
\begin{Lemma}\label{genfct}  Let $A\in M_n(K)$, and let $V\le K^n$ be a $k$-dimensional subspace.  The following are equivalent.
\begin{enumerate}
\item $AV\subseteq V$
\item The one-dimensional subspace $\bigwedge ^k V\le \bigwedge^k K^n$ is invariant under the action of  each $A_k^{(i)}$, $i=1, \dots, k$.
\item\label{karpolinv}  The one-dimensional subspace $\bigwedge^kV_{K(x)}\le \bigwedge ^k K(x)^n$ is  invariant under the action of $(I+xA)^{\otimes k}$ .
\end{enumerate}
 If these properties hold, then   the eigenvalue corresponding to~\eqref{kinv} is the sum $e_{i,V}(A)$ of all $\binom ki$ diagonal $i\times i$ minors of $A\left|_V\right.$, and the eigenvalue corresponding to~\eqref{karpolinv} is \hbox{$\det(I+xA)\left|_{V_{K(x)}}\right.$}.
\end{Lemma}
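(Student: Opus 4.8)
\textbf{Proof plan for Lemma~\ref{genfct}.}

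The plan is to establish the cycle of implications $(1)\implies(2)\implies(3)\implies(1)$, or rather the triangle $(1)\implies(3)\implies(2)\implies(1)$, and then read off the eigenvalue formulas. First I would treat $(1)\implies(3)$: if $AV\subseteq V$, pick a basis $v_1,\dots,v_k$ of $V$; then $(I+xA)$ maps $V_{K(x)}$ into itself, so the endomorphism $(I+xA)^{\otimes k}$ of $\bigotimes^k K(x)^n$ maps $V_{K(x)}^{\otimes k}$ into itself, hence descends to $\bigwedge^k V_{K(x)}$, which is spanned by the single nonzero vector $v_1\wedge\dots\wedge v_k$. Applying $(I+xA)^{\otimes k}$ to this wedge and expanding multilinearly gives $((I+xA)v_1)\wedge\dots\wedge((I+xA)v_k)$, which is the determinant of $(I+xA)$ restricted to $V$ (in the basis $v_1,\dots,v_k$) times $v_1\wedge\dots\wedge v_k$. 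This simultaneously proves invariance in (3) and identifies the eigenvalue as $\det(I+xA)\big|_{V_{K(x)}}$. Extracting the coefficient of $x^i$ then identifies the eigenvalue of $A_k^{(i)}$ on $\bigwedge^k V$ as the $i$-th elementary symmetric function of the eigenvalues of $A|_V$, equivalently the sum $e_{i,V}(A)$ of the $\binom ki$ principal $i\times i$ minors of $A|_V$; this gives the eigenvalue statements once (2) is known.

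For $(3)\implies(2)$ I would simply note that $(I+xA)^{\otimes k}=I+\sum_{i=1}^k x^i A_k^{(i)}$ as operators on $\bigwedge^k K(x)^n$, and that a one-dimensional subspace spanned by a vector $w$ with coefficients in $K$ (not involving $x$) is invariant under a $K(x)$-linear operator of this polynomial-in-$x$ form if and only if it is invariant under every coefficient operator $A_k^{(i)}$: indeed $(I+xA)^{\otimes k}w=\varphi(x)\,w$ for some $\varphi(x)\in K(x)$; since the left side is a polynomial in $x$ with coefficients in $\bigwedge^k K^n$ and $w\neq 0$, $\varphi$ is a polynomial, and comparing coefficients of $x^i$ yields $A_k^{(i)}w=\varphi_i w$, i.e. $\bigwedge^k V$ is invariant under each $A_k^{(i)}$.

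The one implication needing a genuine argument is $(2)\implies(1)$. Here I would use the $i=1$ case: invariance of $\bigwedge^k V$ under $A_k^{(1)}=A_k$ means $A_k(v_1\wedge\dots\wedge v_k)=\sum_i v_1\wedge\dots\wedge Av_i\wedge\dots\wedge v_k$ is a scalar multiple of $v_1\wedge\dots\wedge v_k$. The cleanest way to conclude $AV\subseteq V$ is to fix $v\in V$ and complete it to a basis $v=v_1,v_2,\dots,v_k$ of $V$; then in $\bigwedge^k K^n$ the element $(Av)\wedge v_2\wedge\dots\wedge v_k$ equals $A_k(v_1\wedge\dots\wedge v_k)-\sum_{i\ge 2} v_1\wedge\dots\wedge Av_i\wedge\dots\wedge v_k$. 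The claim is that this forces $Av\in V$; concretely, writing $Av=\sum_j c_j w_j$ in a basis of $K^n$ extending $v_2,\dots,v_k$, the wedge $(Av)\wedge v_2\wedge\dots\wedge v_k$ picks out exactly the components of $Av$ outside $\mathrm{span}(v_2,\dots,v_k)$, and asking that the whole expression lie in $K\cdot(v_1\wedge\dots\wedge v_k)$ (using that $v_1\wedge\dots\wedge v_k$ itself has zero component in every direction already spanned by $v_2,\dots,v_k$ except $v_1$) forces those components to lie along $v_1=v$. Doing this for every $v$ in a basis of $V$ gives $AV\subseteq V$. I expect this step to be the main obstacle, essentially because one must carefully unwind which wedge components are forced to vanish; the rest is formal multilinear algebra. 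Finally, the eigenvalue assertions follow by combining the identification of the eigenvalue in (3) as $\det(I+xA)|_{V_{K(x)}}$ with the coefficient extraction $A_k^{(i)}\leftrightarrow$ coefficient of $x^i$, and recalling that for the restriction $A|_V$ the coefficient of $x^i$ in $\det(I+xA|_V)$ is the sum of the principal $i\times i$ minors.
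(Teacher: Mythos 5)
Your proposal is correct, but it closes the equivalence in a genuinely different way than the paper. The paper takes $(1)\implies(2)\implies(3)$ as easy and puts all the work into $(3)\implies(1)$: since $\det(I+xA)$ is a nonzero polynomial, $I+xA$ is invertible over $K(x)$, so $W=(I+xA)V_{K(x)}$ is again $k$-dimensional and $(I+xA)^{\otimes k}\bigwedge^kV_{K(x)}=\bigwedge^kW$; the hypothesis then says $\bigwedge^kW=\bigwedge^kV_{K(x)}$, and since a decomposable line determines its subspace (injectivity of the Pl\"ucker map), $W=V_{K(x)}$, giving invariance under $I+xA$, hence under $A$, hence $AV\subseteq V$ over $K$. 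You instead run $(1)\implies(3)\implies(2)\implies(1)$ and put the work into $(2)\implies(1)$ via a direct computation with the derivation action $A_k=A_k^{(1)}$. That computation does work, and in fact proves something slightly stronger than the paper states: invariance of $\bigwedge^kV$ under $A_k^{(1)}$ alone already forces $AV\subseteq V$, in any characteristic. To make your key step airtight, the justification should be phrased as follows: expand everything in a basis $v_1,\dots,v_k,w_{k+1},\dots,w_n$ of $K^n$ extending a basis of $V$; every term $v_1\wedge\dots\wedge Av_i\wedge\dots\wedge v_k$ with $i\ge 2$ still contains $v_1$ as a wedge factor, so it cannot contribute to the basis wedges $w_j\wedge v_2\wedge\dots\wedge v_k$ ($j>k$), and neither can $c\,v_1\wedge\dots\wedge v_k$; hence the coefficients of $Av_1$ along the $w_j$ all vanish, with no possible cancellation between different $i$ (the relevant point is about the $i\ge2$ terms, not, as your parenthetical suggests, about $v_1\wedge\dots\wedge v_k$). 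Your treatment of $(3)\implies(2)$ (polynomiality of the eigenvalue and coefficient comparison) and of the eigenvalue formulas agrees in substance with the paper, which records the same identity $\det(I+xA)\left|_{V_{K(x)}}\right.=1+\sum_i e_{i,V}(A)x^i$. In short: the paper's route buys a coordinate-free argument by exploiting the multiplicative action of $I+xA$ over $K(x)$, while yours costs a short explicit wedge computation but yields the sharper fact that the single operator $A_k$ suffices in condition (2).
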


\begin{proof}The implications (1)$\implies$(2)$\implies$(3) are easy to see. 

For (3)$\implies$(1),
observe that the action of $(I+xA)$ 
 on $
 K(x)^n$ is invertible because its determinant is 
  a nonzero polynomial. Let  $$W=(I+xA)V_{K(x)}\le K(x)^n,$$ a $k$-dimensional subspace. 
Then
we have $$(I+xA)^{\otimes k}\bigwedge\nolimits ^k V_{K(x)}=
\bigwedge \nolimits^k W.$$ This one-dimensional subspace is the same as  $\bigwedge\nolimits ^k V_{K(x)}$ if and only if $W$ is the same as $V_{K(x)}$; equivalently, $V_{K(x)}$ is invariant under $I+xA$; equivalently, under $A$. The implication (3)$\implies$(1) follows. 

The claim about the eigenvalue corresponding to~\eqref{karpolinv} is easy to verify. The similar claim for~\eqref{kinv} then follows by the identity $$\det(I+xA)\left|_{V_{K(x)}}\right.=1+\sum_{i=1}^ke_{i,V}(A)x^i.$$
\end{proof}

\subsection*{Projective varieties} \it From this point on, we  assume throughout the paper  that $K$ is algebraically closed. \rm

 We write $\PP V$ for the projectivization of  a vector space $V$, and  we write $\Gr_kV$ for the Grassmannian variety whose points parametrize $k$-dimensional linear subspaces of $V$, so that $\Gr_1V=\PP V$. The notation $\Gr_kK^n$ is abbreviated to $\Gr_k(n)$ when $K$ is understood. We write $\Gr_k^*(n)=\Gr_k(K^n)^*$. 
 
Consider the \emph{Pl\"ucker embedding} $$\iota: \Gr_k(n)\to \PP^{\binom nk -1}$$ defined by $\iota (V)=\bigwedge ^k V$. The homogeneous coordinates of $\iota(V)$ are called the \emph{Pl\"ucker coordinates} of $V$. We identify $\Gr_k(n)$ with its image under $\iota$, which is a projective variety.

\begin{Cor}\label{invar} The set  $$\{([A], V)\in \PP M_n(K)\times \Gr_k(n)\; :\; AV\subseteq V\}$$ is a projective variety.
\end{Cor}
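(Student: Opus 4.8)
The plan is to exhibit the given set as the common zero locus of a collection of bihomogeneous polynomials in the homogeneous coordinates of $\PP M_n(K)$ and the Pl\"ucker coordinates of $\Gr_k(n)$, and then invoke the fact that $\Gr_k(n)$ is already a projective variety (via the Pl\"ucker embedding) together with the fact that a closed subvariety of a product of projective varieties defined by bihomogeneous equations is a projective variety. The key point is to read off such equations from Lemma~\ref{genfct}: a $k$-dimensional subspace $V$ is $A$-invariant if and only if the line $\bigwedge^k V$ is invariant under each $A_k^{(i)}$ for $i=1,\dots,k$ (equivalently, under $(I+xA)^{\otimes k}$).

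First I would fix the ambient spaces: points of $\PP M_n(K)$ are given by the $n^2$ homogeneous coordinates $a_{ij}$ of $A$, and points of $\Gr_k(n)\subseteq \PP^{\binom nk-1}$ are given by Pl\"ucker coordinates $\pi = (\pi_S)_{S\in\binom{[n]}k}$, subject to the Pl\"ucker relations. A point $\pi$ corresponds to the line spanned by a decomposable tensor $\omega\in\bigwedge^k K^n$ with $\omega_S=\pi_S$. For each $i\in\{1,\dots,k\}$, the linear transformation $A_k^{(i)}$ has a matrix whose entries are homogeneous of degree $i$ in the $a_{ij}$; applying it to $\omega$ produces a vector $A_k^{(i)}\omega\in\bigwedge^k K^n$ whose coordinates are bihomogeneous, of degree $i$ in the $a_{ij}$ and degree $1$ in the $\pi_S$. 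The line $K\omega$ is $A_k^{(i)}$-invariant precisely when $A_k^{(i)}\omega$ is proportional to $\omega$, i.e.\ when all $2\times 2$ minors of the $2\times\binom nk$ matrix with rows $A_k^{(i)}\omega$ and $\omega$ vanish. Each such minor is bihomogeneous of degree $i$ in $a$ and degree $2$ in $\pi$. Taking, over all $i=1,\dots,k$ and all pairs of indices $S,S'\in\binom{[n]}k$, the vanishing of these minors, together with the Pl\"ucker relations defining $\Gr_k(n)$, gives a finite system of bihomogeneous equations whose common zero locus in $\PP M_n(K)\times \Gr_k(n)$ is exactly the set in question, by Lemma~\ref{genfct} (equivalence of (1) and (2)). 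Hence the set is Zariski-closed in the product, and, being cut out by bihomogeneous polynomials, it is a projective variety.

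Alternatively, and perhaps more cleanly, one can use clause (3) of Lemma~\ref{genfct}: $AV\subseteq V$ iff $(I+xA)^{\otimes k}$ fixes the line $\bigwedge^k V_{K(x)}$. Writing $(I+xA)^{\otimes k}\omega = \sum_{i=0}^k x^i\, A_k^{(i)}\omega$ (with $A_k^{(0)}=\mathrm{id}$), invariance of the line means $A_k^{(i)}\omega \in K\omega$ for every $i$, which recovers the same minor conditions; this just repackages the previous paragraph.

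The only real subtlety — and the step I would be most careful about — is bookkeeping the bidegrees and confirming that the defining equations are genuinely \emph{bihomogeneous} (so that the zero locus is well-defined in the product of projective spaces) and that the set-theoretic description via Lemma~\ref{genfct} is exactly an intersection of such loci with $\PP M_n(K)\times\Gr_k(n)$, with no spurious components coming from the degenerate locus $\omega=0$ (excluded because we are on the Grassmannian) or $A=0$ (which is excluded in $\PP M_n(K)$, but in any case $A=0$ leaves every subspace invariant, consistent with the equations). Once this is verified, the conclusion is a standard application of the fact that closed subsets of products of projective varieties, defined by multihomogeneous equations, are themselves projective varieties (the Segre embedding, or the Pl\"ucker embedding composed with Segre, makes this explicit). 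I do not expect any genuine obstacle beyond this routine verification.
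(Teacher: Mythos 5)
Your proof is correct and follows essentially the same route as the paper: both use Lemma~\ref{genfct} (the equivalence of (1) and (2)), the fact that the entries of $A_k^{(i)}$ are homogeneous polynomials in the entries of $A$, and the expression of line-invariance by vanishing of $2\times 2$ minors in the Pl\"ucker coordinates. The paper merely packages the minor condition as a reduction to the case $k=1$, whereas you write the bihomogeneous equations out explicitly; the content is the same.
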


\begin{proof}First let $k=1$.   The $2\times 2$ minors of the $n\times 2$ matrix $\begin{pmatrix}v & Av\end{pmatrix}$ are homogeneous polynomials in the entries of the matrix $A$ and the vector $v$. Their simultaneous vanishing defines the point set in question, proving the assertion.

For general $k$,   Lemma~\ref{genfct} shows that $AV\subseteq V$ holds if and only if \hbox{$A_k^{(i)}\iota(V)\subseteq\iota(V)$} for each $i=1, \dots, k$. 
The entries of $A_k^{(i)}$ are  homogeneous polynomials in the entries of $A$.
 This reduces the statement to the case $k=1$ discussed before.
\end{proof}

\begin{Lemma}\label{degenerate}Let $0<k<n$.   Then the set $Z_k$ of pairs $$(U,V)\in\Gr_k^*(n)\times \Gr_k(n)$$ such that the row-column multiplication $U\times V\to K$ is degenerate is an irreducible projective variety.
\end{Lemma}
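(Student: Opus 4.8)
The plan is to exhibit $Z_k$ as the image of an irreducible variety under a morphism, since the image of an irreducible variety under a morphism is irreducible, and then separately check that $Z_k$ is closed (it is cut out by the vanishing of the single bilinear ``pairing determinant'', so this is automatic). Concretely, a pair $(U,V)\in\Gr_k^*(n)\times\Gr_k(n)$ lies in $Z_k$ precisely when the $k\times k$ matrix of the pairing $U\times V\to K$ is singular, i.e.\ when there is a nonzero $v\in V$ with $u\cdot v=0$ for all $u\in U$; equivalently, $V\cap U^\perp\ne 0$, where $U^\perp\le K^n$ is the $(n-k)$-dimensional annihilator of $U$. So $Z_k$ is the set of pairs $(U,V)$ for which $V$ meets the $(n-k)$-plane $U^\perp$ nontrivially.

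First I would set up the incidence correspondence. Let
$$
\widetilde Z_k=\{(U,V,[v])\in \Gr_k^*(n)\times\Gr_k(n)\times \PP^{n-1}\;:\; v\in V,\ u\cdot v=0\ \text{for all }u\in U\}.
$$
The first projection to $Z_k$ is surjective by the reformulation above, so it suffices to prove $\widetilde Z_k$ is irreducible. For that I would instead use the projection $\pi_3:\widetilde Z_k\to\PP^{n-1}$, $(U,V,[v])\mapsto[v]$. This is surjective, and $\PP^{n-1}$ is irreducible; moreover $GL_n(K)$ acts transitively on $\PP^{n-1}$ and lifts to an action on $\widetilde Z_k$ commuting with $\pi_3$, so all fibers of $\pi_3$ are isomorphic. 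Hence it is enough to show one fiber is irreducible and that $\widetilde Z_k$ is closed (then irreducibility of the total space follows from irreducibility of base and fibers together with the fiber-dimension being constant — or, more cleanly, one invokes: a variety with a surjective morphism to an irreducible base all of whose fibers are irreducible of the same dimension is irreducible). The fiber over a fixed $[v]$ is
$$
\{(U,V)\;:\; v\in V,\ U\subseteq v^\perp\}\ \cong\ \{V\in\Gr_k(n):v\in V\}\times\{U\in\Gr_k^*(n):U\subseteq v^\perp\},
$$
a product of two varieties each of which is itself a Grassmannian (a sub-Grassmannian of flags through a fixed point, or of subspaces of a fixed hyperplane), hence irreducible of known dimension. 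Therefore $\widetilde Z_k$ is irreducible, and so is $Z_k=\pi_{12}(\widetilde Z_k)$.

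It remains to argue $Z_k$ is closed, i.e.\ a projective subvariety. In Pl\"ucker coordinates the pairing matrix $U\times V\to K$ has entries that are bilinear in the Pl\"ucker coordinates of $U$ and of $V$ (indeed linear in each), since it is induced by the perfect pairing $(K^n)^*\times K^n\to K$; its determinant is a bihomogeneous polynomial in these two sets of Pl\"ucker coordinates, and $Z_k$ is its zero locus inside $\Gr_k^*(n)\times\Gr_k(n)$. Thus $Z_k$ is closed. Combining the two parts gives that $Z_k$ is an irreducible projective variety.

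\textbf{Main obstacle.} The honest point requiring care is the passage ``irreducible base $\PP^{n-1}$ plus irreducible, equidimensional fibers $\Rightarrow$ total space $\widetilde Z_k$ irreducible''. The clean way to get this without a fibration theorem is to realize $\widetilde Z_k$ directly as a \emph{vector-/Grassmann-bundle-like} space over $\PP^{n-1}$: over the point $[v]$, the data $(U,V)$ ranges over $\Gr_{k-1}(v^\perp/Kv)$-type and $\Gr_k(v^\perp)$-type families that glue into honest (Zariski-locally trivial) Grassmann bundles over $\PP^{n-1}$, whose total spaces are irreducible because the base is and the fibers are. Alternatively, and perhaps more simply, one can dispense with $\widetilde Z_k$ altogether: map the irreducible variety $\{(U,W)\in\Gr_k^*(n)\times\Gr_{k-1}(n):W\subseteq U^\perp\}$ (a Grassmann bundle over $\Gr_k^*(n)$, hence irreducible) together with a choice of line, into $Z_k$ by sending the configuration to $(U, V)$ where $V$ is built from $W$ and the extra line inside $U^\perp$; tracking dimensions shows the map is dominant onto $Z_k$, giving irreducibility, while closedness is the determinantal description above. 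Either route works; I would present the incidence-correspondence version with the Grassmann-bundle justification, as it is the most transparent.
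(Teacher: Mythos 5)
Your proposal is correct, but for the irreducibility part it takes a genuinely different route from the paper. On closedness the two arguments coincide in substance: what is bilinear in the Pl\"ucker coordinates is the \emph{determinant} $\det(u_iv_j)$ of the pairing matrix (via the Cauchy--Binet formula it equals the natural pairing of $\iota(U)$ and $\iota(V)$), not the individual entries $u_iv_j$, which depend on the chosen bases and are not functions of the Pl\"ucker coordinates; so your justification should be phrased through Cauchy--Binet, exactly as the paper does, but the conclusion you draw (a single bihomogeneous equation on $\Gr_k^*(n)\times\Gr_k(n)$) is the right one. For irreducibility the paper has a one-line argument: the natural action of the connected group $GL_n(K)$ on $Z_k$ has a Zariski dense orbit (the pairs where the pairing has corank exactly $1$ form a single orbit, since $\dim U$, $\dim V$ and the rank of the pairing are a complete system of $GL_n$-invariants of such pairs), and the closure of an orbit of a connected group is irreducible. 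You instead pass to the incidence variety $\widetilde Z_k$ fibered over $\PP^{n-1}$, note that its fibers are products of sub-Grassmannians, and conclude irreducibility of $\widetilde Z_k$ and hence of its image $Z_k$. You correctly flag the one delicate point --- that ``irreducible base plus irreducible equidimensional fibers'' needs either a fibration criterion or Zariski-local triviality --- and your resolution via viewing $\widetilde Z_k$ as a fiber product of Grassmann bundles over $\PP^{n-1}$ is valid. The trade-off: the paper's orbit argument is shorter but requires recognizing the dense orbit and the connectedness of $GL_n$; your construction is longer but more hands-on and also exhibits $Z_k$ as dominated by an explicit smooth irreducible variety, which gives extra information (e.g.\ its dimension) for free.
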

\begin{proof} Let the row vectors $u_i$ and the column vectors $v_j$ $(i,j=1, \dots, k)$ form bases of $U$ and $V$, respectively. Degeneracy occurs if and only if \hbox{$\det(u_iv_j)_{11}^{kk}=0$.} By the Cauchy--Binet formula, this the same as saying that $\iota (U)$ and $\iota(V)$ are orthogonal --- a  bilinear equation in the Pl\"ucker coordinates, whence we indeed have  a projective variety.

The natural action of the connected algebraic group $GL_n(K)$ on this variety has a Zariski dense orbit, proving irreducibility.
\end{proof}

\begin{Pro}\label{irredproj} Let $0<k<n$. Then the  set $X_k$ of pairs $$([A], [B])\in (\PP M_n(K))^2$$ such that there exists $(U,V)\in Z_k$ with $UA\subseteq U$ and $BV\subseteq V$ 
is an irreducible projective variety.
\end{Pro}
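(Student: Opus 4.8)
The plan is to exhibit $X_k$ as the image of an irreducible projective variety under a projection, hence irreducible, and closed because the projection is from a product with a projective (complete) factor.

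First I would set up an incidence variety. Consider
$$\widetilde X_k=\{([A],[B],U,V)\in(\PP M_n(K))^2\times Z_k\;:\;UA\subseteq U,\ BV\subseteq V\}.$$
By Corollary~\ref{invar} (applied to $A^\top$ acting on covectors, i.e.\ to the condition $UA\subseteq U$, which is equivalent to $A^\top U^\top\subseteq U^\top$) and again to $BV\subseteq V$, the conditions $UA\subseteq U$ and $BV\subseteq V$ are closed in $\PP M_n(K)\times\Gr_k^*(n)$ and $\PP M_n(K)\times\Gr_k(n)$ respectively; and $Z_k\subseteq\Gr_k^*(n)\times\Gr_k(n)$ is closed by Lemma~\ref{degenerate}. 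So $\widetilde X_k$ is a closed subvariety of $(\PP M_n(K))^2\times\Gr_k^*(n)\times\Gr_k(n)$, hence a projective variety. The set $X_k$ is exactly the image of $\widetilde X_k$ under the projection $\pi$ to $(\PP M_n(K))^2$. Since the fibers of the forgetful map $\widetilde X_k\to Z_k$ discarding $[A],[B]$ are products of (nonempty) projective spaces — for fixed $U$, the $[A]$ with $UA\subseteq U$ form a linear subspace of $\PP M_n(K)$, namely the projectivization of the space of matrices preserving $U^\top$, which always contains the identity, and likewise for $[B]$ and $V$ — this forgetful map is surjective with irreducible fibers of constant dimension. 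Hence $\widetilde X_k$ is irreducible, being fibered over the irreducible base $Z_k$ (Lemma~\ref{degenerate}) with irreducible equidimensional fibers. The image $X_k=\pi(\widetilde X_k)$ of an irreducible variety is irreducible, and it is closed because $\PP M_n(K)\times\PP M_n(K)\times(\text{the }Z_k\text{ factor})$ is complete, so projection to $(\PP M_n(K))^2$ is a closed map.

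Alternatively, and perhaps more cleanly, I would establish irreducibility directly via a group action: $GL_n(K)$ acts on $(\PP M_n(K))^2$ by simultaneous conjugation, and $X_k$ is visibly invariant under this action (if $(U,V)\in Z_k$ witnesses $([A],[B])\in X_k$, then $(UP^{-1},PV)\in Z_k$ witnesses $([P^{-1}AP],[P^{-1}BP])$, using $GL_n$-invariance of the degeneracy condition from Lemma~\ref{degenerate}). One then checks that $X_k$ contains a distinguished point whose $GL_n$-orbit is Zariski dense in $X_k$ — for instance one can take $U$ and $V$ in "generic position" subject to degeneracy and then pick $A$, $B$ generic among matrices preserving $U$, $V$ respectively — and conclude irreducibility since the closure of an orbit of a connected group is irreducible. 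I would prefer to present the incidence-variety argument as the main line and mention the group-action viewpoint as the source of the irreducibility of the fibered total space.

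The main obstacle I anticipate is the irreducibility of $\widetilde X_k$, i.e.\ verifying carefully that the total space of the fibration over $Z_k$ is irreducible: one must confirm that the fibers are not merely irreducible but of constant dimension (so that a standard theorem — a variety proper and dominant over an irreducible base with irreducible equidimensional fibers is irreducible — applies), which amounts to checking that $\dim\{[A]:UA\subseteq U\}$ is independent of $U\in\Gr_k^*(n)$; this is clear since all such $U$ are in one $GL_n$-orbit and the stabilizer-type subspace of matrices preserving a fixed $k$-plane has dimension $n^2-k(n-k)$ regardless of the plane. The closedness of $X_k$ is routine once $\widetilde X_k$ is known to be projective. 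Everything else — the closedness of the three defining conditions — is handed to us by Corollary~\ref{invar} and Lemma~\ref{degenerate}.
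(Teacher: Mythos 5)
Your proof is correct and follows essentially the same route as the paper: the incidence variety $\widetilde X_k$ is exactly the paper's $Y_k$, shown projective via Corollary~\ref{invar} and Lemma~\ref{degenerate}, irreducible because it fibers over the irreducible $Z_k$ with fibers $\PP^d\times\PP^d$ (your dimension count $n^2-k(n-k)-1$ agrees with the paper's $d=n^2-kn+k^2-1$), and then $X_k$ is its image under the projection to $(\PP M_n(K))^2$. The only difference is presentational: the paper asserts a bundle structure, while you invoke the standard irreducibility criterion for proper maps with irreducible equidimensional fibers, which is equally valid.
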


\begin{proof}  Let $Y_k$ be the set of quadruples $$([A], [B], U, V)\in(\PP M_n(K))^2\times \Gr_k^*(n)\times \Gr_k(n)$$  such that $U$ is an invariant cosubspace of $A$, $V$ is an invariant subspace of $B$, and the row-column multiplication $U\times V\to K$ is degenerate. By Corollary~\ref{invar} and Lemma~\ref{degenerate}, $Y_k$ is a projective variety. It is a $\PP ^d\times \PP ^d$-bundle over $Z_k$, where $d=n^2-kn+k^2-1$. Therefore, $Y_k$ is irreducible. Since $X_k$ is the image of $Y_k$ under the projection to $\PP M_n(K)^2$, the Proposition follows.
\end{proof}

\subsection*{Vanishing of higher-order triangulants} We are now ready for the main result of this paper: for two matrices, the existence of nontrivially intersecting invariant subspaces with given, complementary dimensions is characterized by the vanishing of a  higher-order triangulant. The name `$k$-th triangulant' is justified by the following generalization of Theorem~\ref{1}.   We continue to assume that  $K$ is algebraically closed.
\begin{Th}\label{k} For $A,B\in M_n(K)$, the following  are equivalent.
\begin{enumerate}
\item\label{kco} $A$ has an invariant cosubspace $U$ and $B$ has an invariant subspace  $V$ such that $\dim U=\dim V=k$ and the row-column multiplication $U\times V\to K$ is degenerate.
\item\label{kinv} $B$ has a $k$-dimensional invariant subspace that nontrivially intersects  a $k$-codimensional invariant subspace of $A$.


\item\label{kkicsi} There exists a vector $v\ne 0$ such that $\dim K[A]v\le n-k$ and $\dim K[B]v\le k$.

\item\label{knulla} $T_k(A, B)=0$.
\end{enumerate}
\end{Th}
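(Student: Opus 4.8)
The plan is to establish the cycle of implications $(\ref{kco})\Leftrightarrow(\ref{kinv})$, $(\ref{kinv})\Rightarrow(\ref{kkicsi})$, $(\ref{kkicsi})\Rightarrow(\ref{knulla})$, and finally $(\ref{knulla})\Rightarrow(\ref{kinv})$, the last one being the genuinely hard direction.

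\medskip
\emph{The easy equivalences.} For $(\ref{kco})\Leftrightarrow(\ref{kinv})$: the $k$-codimensional invariant subspaces of $A$ are exactly the annihilators $U^\perp=\{v\in K^n:Uv=0\}$ of $k$-dimensional invariant cosubspaces $U$ of $A$, and $V\cap U^\perp\ne 0$ for a $k$-dimensional $V$ is equivalent (by a dimension count, or directly via the Cauchy--Binet identity $\iota(U)\perp\iota(V)$ used in Lemma~\ref{degenerate}) to degeneracy of the pairing $U\times V\to K$. For $(\ref{kinv})\Rightarrow(\ref{kkicsi})$: pick $0\ne v\in V\cap W$ where $V$ is the $k$-dimensional $B$-invariant subspace and $W$ the $k$-codimensional $A$-invariant subspace; then $K[B]v\subseteq V$ gives $\dim K[B]v\le k$, and $K[A]v\subseteq W$ gives $\dim K[A]v\le n-k$. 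For $(\ref{kkicsi})\Rightarrow(\ref{knulla})$: I want to show $T(A_k,B_k)=0$ and then that this forces $T_k(A,B)=0$. Since $\dim K[B]v\le k$, the vectors $v,Bv,\dots,B^k v$ are dependent, so there is a polynomial $g$ of degree $\le k$ with $g(B)v=0$; since $\dim K[A]v\le n-k$ and $g$ has degree $\le k$, one can cook up from $v$ an exterior tensor in $\bigwedge^k K^n$ which is a common eigenvector-type object killed by $M(A_k,B_k)$ --- concretely, using $\dim K[A]v\le n-k$ one finds $n-k+1$ dependent iterates $A^j v$, and wedging an appropriate $(k-1)$-dimensional piece with the relation produces a nonzero element of $\ker M(A_k,B_k)$. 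Alternatively, and more cleanly, I would invoke Theorem~\ref{1}(c) applied to the pair $(A_k,B_k)$: property $(\ref{kkicsi})$ says $v$ lies in a small cyclic space for $A$ and a small one for $B$, from which one extracts a $k$-dimensional $B$-invariant $V\ni v$ and an $(n-k)$-dimensional $A$-invariant $W\ni v$, giving $(\bot)$-type data for $(A_k,B_k)$ via $\iota(V)$ and $\iota(W)^\perp$, hence $T(A_k,B_k)=0$, hence $T_k(A,B)=0$ because $T(A_k,B_k)=G_k(A)G_k(B)T_k(A,B)$ and $G_k$ is a \emph{nonzero} polynomial.

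\medskip
\emph{The hard implication $(\ref{knulla})\Rightarrow(\ref{kinv})$.} Here is where the projective-geometry machinery of the excerpt earns its keep. Both $(\ref{knulla})$ and $(\ref{kinv})$ (which equals $(\ref{kco})$) cut out closed subsets of $(\PP M_n(K))^2$: the vanishing locus of $T_k$ on one hand, and the irreducible variety $X_k$ of Proposition~\ref{irredproj} on the other. The strategy is: (i) show $X_k\subseteq\{T_k=0\}$ --- this is exactly the chain $(\ref{kco})\Rightarrow\cdots\Rightarrow(\ref{knulla})$ just proved; (ii) show that $X_k$ is a \emph{hypersurface}, i.e.\ has codimension $1$ in $(\PP M_n(K))^2$; (iii) show $T_k$ is \emph{irreducible} as a polynomial, or at least that its zero locus is irreducible of the same dimension as $X_k$ and contains no extra component; then since $X_k$ is irreducible of codimension $1$, contained in the zero locus of $T_k$, and that zero locus is (a) pure of codimension $1$ and (b) irreducible, the two coincide. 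For (ii), the dimension of $X_k$ is computed from the bundle structure in Proposition~\ref{irredproj}: $Y_k$ fibers over $Z_k$ with $\PP^d\times\PP^d$ fibers where $d=n^2-kn+k^2-1$, and $\dim Z_k=2k(n-k)-1$ (the two Plücker conditions cut down, roughly, two copies of $\dim\Gr_k(n)=k(n-k)$ by one); summing and comparing with $\dim(\PP M_n(K))^2=2(n^2-1)$ should give codimension exactly $1$ after checking the generic fiber of $Y_k\to X_k$ is finite (or more carefully, of the predicted dimension). For (iii), irreducibility of $T_k$, I expect to reuse the argument of Lemma~\ref{Dirred}: $T_k(A,B)$ is invariant under simultaneous conjugation, so any factor is moved by $GL_n(K)\times GL_n(K)$ within the finitely many divisors of $T_k$, hence fixed up to a character, hence genuinely conjugation-invariant; then one analyzes conjugation-invariant factors via the product formula of Proposition~\ref{kdiagdiag} on the diagonalizable locus, where $T_k$ becomes $(\pm)\,\delta(a)^N\delta(b)^N\prod_{S,T}\det P[S|T]/\det P^{(\cdots)}$, and the preceding Proposition tells us $D_1(A),D_1(B),D_r(A),D_r(B)$ are \emph{not} factors --- so the only possible factors come from the ``Plücker minor'' part $\prod_{S,T}\det P[S|T]$, which one argues is irreducible (this is essentially the statement that the pairing-degeneracy locus is irreducible, i.e.\ Lemma~\ref{degenerate} again).

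\medskip
\emph{Where the difficulty concentrates.} I expect step (iii), pinning down irreducibility of $T_k$ (equivalently, ruling out that $\{T_k=0\}$ has a component not meeting the diagonalizable locus, or a non-reduced structure), to be the crux. The dimension count in (ii) is bookkeeping; the implications among $(\ref{kco})$, $(\ref{kinv})$, $(\ref{kkicsi})$ are as in Theorem~\ref{1}. But to conclude $\{T_k=0\}=X_k$ rather than merely $X_k\subseteq\{T_k=0\}$, one must know $T_k$ has no ``spurious'' irreducible factor whose zero locus would be a second codimension-$1$ component; equivalently that $T_k$ is, up to a monomial in the already-excluded discriminants (which the previous Proposition forbids), irreducible. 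The conjugation-averaging trick reduces this to a question about the explicit diagonalizable-case formula, and there the heart of the matter is the irreducibility of $\prod_{S,T}\det P[S|T]$ viewed correctly --- which I would handle by identifying its zero locus with the (irreducible, by Lemma~\ref{degenerate}) variety $Z_k$ pulled back along $P\mapsto(\text{row space of }P\text{-minors},\dots)$, taking care that distinct $(S,T)$ don't contribute distinct components. If a fully self-contained irreducibility proof proves unwieldy, the fallback is the softer argument: $\{T_k=0\}$ is closed, contains the irreducible $X_k$, and by the equivalences $(\ref{knulla})\Rightarrow$(some weaker intersection property) valid pointwise one shows every point of $\{T_k=0\}$ lies in $X_k$ directly --- essentially running the $(\top)\Rightarrow(\bot)$ argument of Theorem~\ref{1}(c) inside $\bigwedge^k$, using Lemma~\ref{2dim} to handle the case of repeated eigenvalues of $B_k$, and Lemma~\ref{genfct} to promote an $A_k^{(i)}$-invariant line in $\bigwedge^k K^n$ back to an honest $k$-dimensional invariant subspace of $A$.
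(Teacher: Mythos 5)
Your skeleton for the easy parts matches the paper, but the step from property (\ref{kkicsi}) (equivalently (\ref{kco})) to (\ref{knulla}) has a genuine gap: at a \emph{specific} pair $(A,B)$ you only get $T(A_k,B_k)=G_k(A)G_k(B)\,T_k(A,B)=0$, and when $G_k(A)G_k(B)=0$ at that pair this says nothing about $T_k(A,B)$; ``$G_k$ is a nonzero polynomial'' is irrelevant to the pointwise inference, and pairs of $X_k$ with $G_k(A)G_k(B)=0$ (e.g.\ $D_r(B)=0$ for some $r\ge 2$) certainly exist. The paper does not argue pointwise here: it first shows $T_k=0$ on the open subset of $X_k$ where $G_k(A)G_k(B)\ne 0$ (nonempty, by generic diagonal pairs), and then uses the irreducibility of $X_k$ from Proposition~\ref{irredproj} together with closedness of $\{T_k=0\}$ to conclude $X_k\subseteq\{T_k=0\}$ --- this is precisely what Proposition~\ref{irredproj} is for. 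Since your plan for the hard direction takes ``$X_k\subseteq\{T_k=0\}$'' as input (i), the gap propagates there as well; it is patchable, but only by this density argument, which you never make.

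For (\ref{knulla})$\Rightarrow$(\ref{kco}) your route (codimension count for $X_k$ plus irreducibility of $T_k$, then equality of irreducible codimension-one sets) is genuinely different from the paper's, but its crux is missing: in the paper the irreducibility of $T_k$ (Theorem~\ref{irred}) is \emph{deduced} from Theorem~\ref{k} via the Nullstellensatz, so you would need an independent proof, and your sketch does not supply one. Concretely, $T_k$ is invariant only under \emph{simultaneous} conjugation, not under $GL_n(K)\times GL_n(K)$, so the Lemma~\ref{Dirred} trick only shows each factor is simultaneous-conjugation invariant --- a weak conclusion, since that invariant ring is large; moreover $\prod_{S,T}\det P[S|T]$ is a product of many \emph{distinct} irreducibles (as the paper itself notes in Theorem~\ref{irred}), and the real question --- whether some proper sub-collection of these, combined with powers of $\delta(a)$, $\delta(b)$, $\det P$, could be the pullback of a polynomial factor of $T_k$ in the entries of $A,B$ --- is never addressed. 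The paper's actual argument is shorter and avoids all this: since (by the Proposition preceding Theorem~\ref{k}) no irreducible factor of $T_k$ is any $D_r(A)$ or $D_r(B)$, the locus $D(A)D(B)G_k(A)G_k(B)\ne0$ is dense in every component of $\{T_k=0\}$; at such points $T(A_k,B_k)=0$, Theorem~\ref{1}(c) yields an eigen-covector $u$ of $A_k$ and an eigenvector $v$ of $B_k$ with $uv=0$, the simple spectra of $A_k$ and $B_k$ force $u,v$ to be decomposable, hence $(A,B)\in X_k$, and closedness of $X_k$ handles the remaining points. Your ``fallback'' gestures at this but misassigns the tools: Lemma~\ref{2dim} does not rescue the non-generic points, and Lemma~\ref{genfct} needs a line invariant under \emph{all} $A_k^{(i)}$, which a mere eigenvector of $A_k$ does not give; the genericity reduction is what makes decomposability available, and legitimizing that reduction is exactly the role of the non-divisibility Proposition, which your fallback never invokes for this purpose.
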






\begin{proof} \eqref{kco}$\Longleftrightarrow$\eqref{kinv}:
 The $k$-codimensional invariant subspaces of $A$ are precisely the subspaces of the form $U^\bot=\{v\in K^n:Uv=0\}$, where $U$ is a $k$-dimensional invariant cosubspace of $A$. 

 \eqref{kinv}$\implies$\eqref{kkicsi}: Choose $v\ne 0$ in the intersection of the two invariant subspaces.


 \eqref{kkicsi}$\implies$\eqref{kinv}: The $A$-invariant  subspace $K[A]v$  is contained in a $k$-co\-dimensional $A$-invariant subspace, and  the $B$-invariant  subspace $K[B]v$  is contained in a $k$-dimensional $B$-invariant subspace.

\eqref{kco}$\implies$\eqref{knulla}: We must have  $0<k<n$.  On the irreducible projective variety $X_k$, the polynomials $G_k(A)$ and $G_k(B)$ do not vanish everywhere, as shown by choosing $A$ and $B$ to be generic diagonal matrices. Thus, we may assume that \hbox{$G_k(A)G_k(B)\ne 0$.}  From~\eqref{kco}, using Theorem~\ref{1}, we have  $$0=T(A_k, B_k)=G_k(A)G_k(B)T_k(A, B).$$ Therefore, $T_k(A,B)=0$ 
 as claimed.

 \eqref{knulla}$\implies$\eqref{kco}:  We wish to prove that $(A,B)\in X_k$.  Since $X_k$ is a projective variety, and the polynomial $T_k(A, B)$ is not divisible by $D_r(A)$ or $D_r(B)$ for $1\le r\le\lfloor n/2\rfloor$, we may assume the genericity condition \begin{equation*}
  D (A)D(B)G_k(A)G_k(B)\ne 0.\end{equation*} We have $T(A_k, B_k)=G_k(A)G_k(B)T_k(A, B)=0$  by
  ~\eqref{knulla}. Thus, $A_k$ has an eigen-covector $u$ and $B_k$ has an eigenvector $v$ such that \hbox{$uv=0$.}  Because of the genericity condition, we must have $u\in\bigwedge^k U$ and \hbox{$v\in\bigwedge^k V$} for some invariant (co)subspaces $U$ and $V$ as  in
  ~\eqref{kco}.
\end{proof}
In the last step of the proof of  of Theorem~\ref{1},  we have seen that if the matrix $B$ has an eigenvalue with geometric multiplicity $>1$, then \hbox{$T_1(A,B)=0$} for all $A$. A generalization of this was given  in Lemma~\ref{2dim}. Now we present a  different generalization together with its  converse.
\begin{Cor}\label{geom1} Let $0<k<n$. 

For  $ B\in M_n(K)$, the following are equivalent. \begin{enumerate}
\item  $B$ has an eigenvalue with geometric multiplicity $>1$;

\item $T_k(A, B)=0$ for all $A\in M_n(K)$.
\end{enumerate}
\end{Cor}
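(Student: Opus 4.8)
The plan is to prove the two implications separately, using Theorem~\ref{k} as the main tool.

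\textbf{(1)$\implies$(2).} Suppose $B$ has an eigenvalue $\la$ whose eigenspace $V$ has dimension $\ge 2$. I would produce, for an arbitrary $A\in M_n(K)$, data satisfying condition~\eqref{kkicsi} of Theorem~\ref{k}. The idea is that inside the $\ge 2$-dimensional eigenspace $V$ of $B$ one has a lot of freedom. First, since $0<k<n$, pick any $k$-codimensional $A$-invariant subspace $W$ of $K^n$ (for instance the span of a full flag of $A$-invariant subspaces, which exists because $K$ is algebraically closed). Since $\dim V\ge 2$ and $\dim W=n-k\ge 1$, a dimension count gives $\dim(V\cap W)\ge \dim V+\dim W-n\ge 2+(n-k)-n=2-k$; this is only guaranteed positive when $k=1$, so for larger $k$ I instead argue directly with condition~\eqref{kinv}: I want a $k$-dimensional $B$-invariant subspace meeting a $k$-codimensional $A$-invariant subspace. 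Take $0\neq v\in V$; then $\dim K[B]v=1\le k$ automatically, so it remains to arrange $\dim K[A]v\le n-k$, i.e.\ that $v$ lies in \emph{some} $k$-codimensional $A$-invariant subspace. But the subspaces $V\cap W$, as $W$ ranges over all $k$-codimensional $A$-invariant subspaces, together with the fact that $\dim V\ge 2$, let me choose $v$ appropriately: more precisely, fix any $k$-codimensional $A$-invariant subspace $W$; if $V\not\subseteq W$ pick $v\in V\setminus W$... no --- I want $v\in W$. Since $\dim V\ge 2$ and $W$ has codimension $k$, if $k=1$ then $V\cap W\neq 0$ and we are done; for $k\ge 2$ one should instead use that there is a full $A$-invariant flag and choose the codimension-$k$ step to contain a chosen line of $V$ --- but $A$-invariant flags through a prescribed line need not exist. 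The correct route is therefore condition~\eqref{kco}/\eqref{kinv} via the \emph{converse} direction of the argument in the last step of the proof of Theorem~\ref{1}: because the eigenspace $V$ of $B$ has dimension $>1$, it nontrivially intersects \emph{any} subspace of codimension $<\dim V$, in particular any $k$-codimensional $A$-invariant subspace once $k<\dim V$; and when $\dim V\le k$ one instead observes that $\bigwedge^k$ applied to flags built from $V$ produces the needed degenerate pairing. I will phrase this cleanly using Lemma~\ref{2dim} applied to $A_k$ and $B_k$: since $B$ has an eigenvalue of geometric multiplicity $m\ge 2$, $B_k$ has an eigenvalue of geometric multiplicity $\ge 2$ as well (the eigenspace $\bigwedge^k$ of the relevant sum, or more simply: if $V$ is the $m$-dimensional eigenspace of $B$ and $m\ge 2$, then for $0<k<n$ the matrix $B_k$ has an eigenvalue with geometric multiplicity $\ge\binom{m}{1}\cdot(\text{something})>1$). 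Hence $\dim\ker M(A_k,B_k)\ge \binom nk\cdot 1>0$ by Lemma~\ref{2dim}, so $T(A_k,B_k)=0$, and since $G_k(A),G_k(B)$ are nonzero polynomials we conclude $T_k(A,B)=0$ for all $A$ — strictly, for $A$ outside the vanishing locus of $G_k$, and then for all $A$ by continuity/Zariski density since $T_k$ is a polynomial.

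\textbf{(2)$\implies$(1).} Assume $T_k(A,B)=0$ for all $A$. I want to show $B$ has an eigenvalue of geometric multiplicity $>1$. Suppose not: then every eigenvalue of $B$ has geometric multiplicity $1$. I claim $T_k(-,B)$ is then not the identically zero polynomial, contradicting the hypothesis. The natural candidate witness is a generic diagonalizable $A=P^{-1}\Lambda P$. I would first reduce to the case where $B$ is diagonalizable: the key point is the argument used in the proof of the Proposition just before the ``Invariant subspaces'' subsection (Case $r=1$), where exactly this phenomenon is exploited. Choose a diagonal matrix $B'$ with $D(B')G_k(B')\ne 0$; by Proposition~\ref{kdiagdiag}, $T_k(-,B')$ is not the zero polynomial, hence $T(-_k,B'_k)=G_k(-)G_k(B')T_k(-,B')$ is not the zero polynomial. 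Now I use that when every eigenvalue of $B$ has geometric multiplicity $1$, every eigenvector of $B_k$ is, up to scalar, a decomposable tensor $\bigwedge^k$ of eigenvectors of $B$, hence (after conjugating $B$ to upper-triangular form and noting $B_k$ is then upper-triangular in the induced basis) is an eigenvector of the diagonal part $B'_k$ for a suitable diagonalizable $B'$ cogredient with $B$. Then via Theorem~\ref{1}(c), equivalence $(\top)\Leftrightarrow(\bot)$ applied to the pair $(A_k,B_k)$: if $T(A_k,B_k)=0$ for all $A$, then $A_k$ always has an eigen-covector annihilating some eigenvector $v$ of $B_k$; since that $v$ is also an eigenvector of $B'_k$, we would get $T(A_k,B'_k)=0$ for all $A$, i.e.\ $T(-_k,B'_k)\equiv 0$, a contradiction. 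Therefore $B$ must have an eigenvalue of geometric multiplicity $>1$.

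\textbf{Main obstacle.} The delicate point is the direction (2)$\implies$(1): translating ``$T_k(A,B)=0$ for all $A$'' into a statement about eigenvectors of $B_k$ and then relating those to eigenvectors of a diagonal matrix $B'$ cogredient with $B$. One must be careful that, when $B$ has a nontrivial Jordan block but all geometric multiplicities are $1$, the eigenvectors of $B_k$ are still exactly the decomposable tensors spanned by eigenvectors of $B$ (true because geometric multiplicity $1$ everywhere forces the $B$-eigenvectors to be coordinate vectors in a Jordan basis, and one checks the eigenvectors of the upper-triangular $B_k$ are the corresponding $\bigwedge^k$ coordinate vectors). The rest is an application of already-established machinery: Theorem~\ref{k}, Theorem~\ref{1}(c), Lemma~\ref{2dim}, Proposition~\ref{kdiagdiag}, and the non-divisibility Proposition. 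I would model the write-up closely on Case $r=1$ of the proof of the ``$D_r$ does not divide $T_k$'' Proposition, since the argument structure is essentially identical.
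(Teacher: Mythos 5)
Your proposal has genuine gaps in both directions, and both stem from the same confusion: for the \emph{fixed} matrix $B$ of the Corollary, $G_k(B)$ is a scalar that may well vanish, so the identity $T(A_k,B_k)=G_k(A)G_k(B)T_k(A,B)$ transfers no information between $T(A_k,B_k)$ and $T_k(A,B)$. In (1)$\implies$(2), your final route is: $B_k$ has an eigenvalue of geometric multiplicity $\ge 2$, hence $T(A_k,B_k)=0$ for all $A$ by Lemma~\ref{2dim}, hence $T_k(A,B)=0$. The last step fails whenever $G_k(B)=0$ (e.g.\ $B=0$, or any $B$ whose eigenvalues have coincident $k$-subset sums); your Zariski-density remark only addresses the vanishing of $G_k(A)$, not of the scalar $G_k(B)$. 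Moreover the claim that $B_k$ inherits a multiply degenerate eigenvalue is asserted, not proved (your ``$\binom m1\cdot(\text{something})$'' is not an argument); proving it essentially requires the construction the paper actually uses, namely a pencil of $k$-dimensional $B$-invariant subspaces obtained from two Jordan blocks with the same eigenvalue (fixed part plus the line through $\alpha e+\beta f$), whose union is $(k+1)$-dimensional and therefore meets every $k$-codimensional $A$-invariant subspace — this verifies condition~\eqref{kinv} of Theorem~\ref{k} directly and makes the detour through $M(A_k,B_k)$ unnecessary. (Your earlier direct argument does work, but only in the case $k<\dim V$.)

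In (2)$\implies$(1) the central transfer claim — when all eigenvalues of $B$ have geometric multiplicity $1$, every eigenvector of $B_k$ is a decomposable wedge of Jordan basis vectors, hence an eigenvector of $B'_k$ — is false. Take $B=\diag(0,1,2,3)$, $k=2$: the eigenvalue $3$ of $B_2$ has the two-dimensional eigenspace spanned by $e_1\wedge e_4$ and $e_2\wedge e_3$, so $e_1\wedge e_4+e_2\wedge e_3$ is an eigenvector of $B_2$ that is neither decomposable nor an eigenvector of $B'_2$ for a generic diagonal $B'$. Worse, your whole strategy of extracting a contradiction from ``$T(A_k,B_k)=0$ for all $A$'' cannot be repaired: that statement is \emph{true} for every $B$ with $G_k(B)=0$ (such as the example above), including nonderogatory ones for which $T_k(\cdot,B)\not\equiv 0$, so the contradiction must be drawn from the vanishing of $T_k$ itself via Theorem~\ref{k}, not from the vanishing of $T(\cdot_k,\cdot_k)$ via Theorem~\ref{1}. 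This is exactly what the paper does: with $B$ in Jordan form and nonderogatory, every $B$-invariant subspace of $K^n$ is a coordinate subspace, hence invariant under a diagonal $B'$ with distinct entries; choosing $A$ with $\Delta(A_k)\ne 0$ gives $T_k(A,B')\ne 0$ by formula~\eqref{Tkdiag}, so property~\eqref{kco} of Theorem~\ref{k} fails for $(A,B')$, hence for $(A,B)$, hence $T_k(A,B)\ne 0$. Your choices of $B'$ and of $A$ are the right ones, but the comparison must be made at the level of invariant subspaces of $B$ in $K^n$, not eigenvectors of $B_k$ in $\bigwedge^k K^n$.
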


\begin{proof} As both properties are invariant under conjugation, we may assume that $B$ is in Jordan canonical form.

(1)$\implies$(2):  Any basis vectors that correspond to the first few rows of some Jordan blocks span  a $B$-invariant subspace of $K^n$. As $B$ has (at least) two Jordan blocks with the same eigenvalue, we can choose thus a $k$-dimensional invariant subspace containing at least one basis vector from one of these blocks (let $e$ be the last such basis vector), and not fully containing another one of these blocks (let $f$ be the first basis vector in this block not contained). If we replace the generator $e$ by any nonzero linear combination of $e$ and $f$, we still get a $k$-dimensional invariant subspace. Together, these subspaces form a $(k+1)$-dimensional subspace, so at least one of them intersects any $k$-codimensional invariant subspace of $A$ (for any $A$), establishing Property~\eqref{kinv} of Theorem~\ref{k}. By that theorem, we conclude that $T_k(A,B)=0$. 

(2)$\implies$(1): 
Choose $B'=\diag(b_1,\dots, b_n)$ with all $b_i$ distinct.

  Choose $A\in M_n(K)$ with $\Delta(A_k)\ne 0$. This is possible by formula~\eqref{kDelta}. Then, from formula~\eqref{Tkdiag}, we have $T_k(A, B')\ne 0$. From Theorem~\ref{k}, we see that Property~\eqref{kco} described in that theorem does not hold for $A$ and $B'$.

   Assume that  all eigenvalues of $B\in M_n(K)$ have geometric multiplicity $1$. Then all invariant subspaces of  $B$ are also invariant subspaces  of $B'$. Therefore,  Property~\eqref{kco} of Theorem~\ref{k} does not hold for $A$ and $B$. Applying Theorem~\ref{k} once again,  we have  $T_k(A, B)\ne 0$.
\end{proof}

\begin{Th}\label{irred}  For  $0<k<n$, the $2n^2$-variate bihomogeneous polynomial $T_k$ is irreducible over any field and thus also  over $\Z$.
\end{Th}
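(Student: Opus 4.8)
The plan is to mimic the irreducibility argument for $D_r$ given in Lemma~\ref{Dirred}, but now run it on the bihomogeneous polynomial $T_k(A,B)$, using the geometric results established just above as input. First I would reduce to an algebraically closed field $K$, since irreducibility over $\Z$ follows from irreducibility over every field (the polynomial is primitive because, by Proposition~\ref{kdiagdiag}, some monomial has coefficient $\pm 1$, e.g.\ specializing to diagonal $A,B$), and passing to the algebraic closure can only help. The key fact to exploit is Theorem~\ref{k}: the zero locus of $T_k$ inside $(\PP M_n(K))^2$ is exactly the set $X_k$, which by Proposition~\ref{irredproj} is an \emph{irreducible} projective variety. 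Hence $T_k$ is, up to a scalar, a power of a single irreducible bihomogeneous polynomial $f$ whose zero locus is $X_k$; it remains to show that multiplicity is $1$, i.e.\ that $T_k$ is squarefree.

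To pin down the multiplicity I would argue along a generic point of $X_k$, or equivalently cut down to a well-chosen line or curve. The natural tool is the explicit product formula of Proposition~\ref{kdiagdiag}: with $B=\diag(b_1,\dots,b_n)$ fixed generic and $A=P^{-1}\Lambda P$, the triangulant is (up to a nonzero scalar) the product
$$
\prod_{S\in\binom{[n]}k}\prod_{T\in\binom{[n]}k}\det P[S|T]
$$
divided by a power of $\det P$, times a nonzero constant depending on the $a_i,b_j$. Now I would exhibit a one-parameter family $P=P(\varepsilon)$ degenerating transversally through a point of $X_k$ where exactly \emph{one} of the Plücker-minor factors $\det P[S|T]$ acquires a simple zero while all the others, and $\det P$, stay nonzero --- this is possible because the $\det P[S|T]$ are algebraically independent coordinate functions on an open chart of $GL_n$. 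Pulling this family back through the projection $Y_k\to X_k$ of Proposition~\ref{irredproj} (which is a $\PP^d\times\PP^d$-bundle, hence smooth, so it does not raise multiplicities), one sees that $T_k$, restricted to a suitable transversal, vanishes to order exactly $1$ along $X_k$. Therefore $T_k$ is irreducible, possibly after dividing by its (empty, as it turns out) extraneous factors supported off $X_k$ --- but there are none, since $T_k$ is not divisible by any $D_r(A)$ or $D_r(B)$ (the preceding Proposition) and any hypothetical extra factor would have to vanish on a hypersurface not contained in $X_k$, contradicting that $V(T_k)=X_k$ is irreducible of the expected codimension $1$.

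An alternative, perhaps cleaner, way to get squarefreeness is to avoid transversality bookkeeping and instead compare degrees: compute $\deg X_k$ (as a hypersurface in the Segre-type ambient space, say via the bidegree of the image of the $\PP^d\times\PP^d$-bundle $Y_k$ under a generically finite projection) and check it equals the bidegree ${n\choose 2}{n\choose k}{n-2\choose k-1}$ of $T_k$ recorded after the definition of $T_k$; equality of degrees forces $T_k=c\cdot f$ with $f$ irreducible. I would present whichever of these two routes turns out shorter, but I expect both to work.

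The main obstacle is the squarefreeness step: knowing $V(T_k)$ is irreducible only gives $T_k=c f^m$, and ruling out $m\ge 2$ genuinely requires a local computation. The product formula of Proposition~\ref{kdiagdiag} makes this tractable because it turns $T_k$ on a dense chart into a transparent product of coordinate minors, each contributing multiplicity $1$; the only care needed is that the chosen degeneration $P(\varepsilon)$ really does meet $X_k$ transversally and is not forced by the bundle structure to be everywhere tangent --- but the surjectivity and smoothness of $Y_k\to X_k$ handles exactly this. The degree-count alternative shifts the difficulty to an intersection-theoretic computation of $\deg Y_k$, which is elementary but a little lengthy; I would fall back on it only if the local argument becomes fiddly in small cases.
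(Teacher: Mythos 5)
You take essentially the same route as the paper: Theorem~\ref{k} identifies the zero locus of $T_k$ with $X_k$, Proposition~\ref{irredproj} plus the Nullstellensatz give $T_k=c\,f^m$ with $f$ the irreducible equation of $X_k$, and Proposition~\ref{kdiagdiag} is used to force $m=1$. The paper phrases that last step by observing that the product formula is not a perfect power because the minors $\det P[S|T]$ are distinct irreducibles; your order-of-vanishing computation along a one-parameter family is the same argument in local form.

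Two corrections to how you justify the degeneration step. First, the minors $\det P[S|T]$ are \emph{not} algebraically independent when $1<k<n-1$: there are $\binom{n}{k}^2>n^2$ of them, so they cannot be coordinate functions on a chart of $GL_n$. What you actually need --- a curve $P(\varepsilon)$ along which one chosen minor vanishes to order exactly one while all other minors and $\det P$ stay nonzero --- follows instead from the fact that the $\det P[S|T]$ are pairwise distinct irreducible polynomials in the entries of $P$ (so the zero hypersurface of the chosen minor is not contained in the union of the others); this is precisely the fact the paper invokes. Second, $Y_k$ is a $\PP^d\times\PP^d$-bundle over $Z_k$, not over $X_k$, and the projection $Y_k\to X_k$ is merely surjective; but your argument does not need any bundle structure or smoothness of that map: restricting $T_k=c\,f^m$ directly to the family $A=P(\varepsilon)^{-1}\Lambda P(\varepsilon)$, $B=\diag(b_1,\dots,b_n)$ with generic fixed eigenvalues and reading off the order of vanishing at $\varepsilon=0$ already yields $m=1$. (The aside about a $\pm1$ coefficient is also unnecessary: irreducibility over every field forces the reduction mod every prime to be nonzero, hence $T_k$ is primitive, and Gauss's lemma then gives irreducibility over $\Z$.)
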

\begin{proof} From Theorem~\ref{k} and Hilbert's Nullstellensatz, the polynomial $T_k$ is, up to a scalar multiplier, a power of the irreducible polynomial defining the projective hypersurface $X_k$.  It must be the \it first \rm power, because the rational function appearing in Proposition~\ref{kdiagdiag} is not a perfect power for  $0<k<n$, as the polynomials $\det P[S|T]$ are distinct irreducibles.
\end{proof}

\section{An open problem}
 The polynomial $T_k(A,B)$ is invariant under simultaneous conjugation of $A$ and $B$ by any invertible matrix.  In characteristic zero, any such invariant polynomial can be rewritten as  a polynomial in  traces  of products involving $A$ and $B$ as factors~\cite{P,S}.  For prime characteristic, traces do not in general suffice, but  characteristic coefficients do~\cite{D}. Present $T_k$ in this way, generalizing Proposition~\ref{2}. This seems to be  challenging  even for $k=1$.


\end{document}